\documentclass[12pt]{article}

\usepackage{times, amsmath, amsfonts, amssymb, amsthm}
\usepackage{times}
\usepackage{srcltx}
\usepackage{setspace}

\setlength{\textwidth}{16.0cm} \setlength{\textheight}{22.0cm}
\setlength{\hoffset}{-2.0cm} \setlength{\voffset}{-1.1cm}
\addtolength{\headheight}{3.5pt}


\newcommand{\Norm}[1] {|\!|\!|#1|\!|\!|}
\newcommand{\BigNorm}[1] {\Big|\!\Big|\!\Big|#1\Big|\!\Big|\!\Big|}
\newcommand{\dd}{\text{\rm d}}             
\newcommand{\ee}{\text{\rm e}}

\newcommand{\cF}{\mathcal{F}}

\newcommand{\cL}{\mathcal{L}}
\newcommand{\cM}{\mathcal{M}}

\newcommand{\1}{{\bf{1}}}

\theoremstyle{plain}
\newtheorem{theorem}{Theorem}[section]
\newtheorem{lemma}[theorem]{Lemma}
\newtheorem{proposition}[theorem]{Proposition}
\newtheorem{corollary}[theorem]{Corollary}

\theoremstyle{definition}
\newtheorem{remark}[theorem]{Remark}

\renewenvironment{proof}[1][] {\noindent {\bf Proof#1.} }{\hspace*{\fill}$\square$\medskip\par}

\newenvironment{proofone}[1][] {\noindent {\bf Proof of Theorem \ref{main}#1.} }{\hspace*{\fill}$\square$\medskip\par}

\newcommand{\N}{{\mathbf N}}

\newcommand{\R}{{\mathbf R}}

\newcommand{\E}{{\mathbf E}}
\newcommand{\F}{{\cal F}}

\renewcommand{\P}{{\mathbf P}}

\newcommand{\rC}{{\mathrm C}}
\newcommand{\rL}{{\mathrm L}}
\newcommand{\rM}{{\mathrm M}}

\author{Michael Scheutzow
  \thanks{Institut f\"ur Mathematik, MA 7-5, Fakult\"at II, 
        Technische Universit\"at Berlin, 
        Stra\ss e des 17.~Juni 136, 10623 Berlin, FRG;   
        \small{\tt ms{\scriptsize @}math.tu-berlin.de}}}

\title{Exponential growth rate for a singular linear stochastic delay differential equation}

\date{\today}

\begin{document}  \maketitle

\begin{abstract}\noindent
We establish the existence of a deterministic exponential growth rate for the norm (on an appropriate function space) 
of the solution of the linear scalar stochastic delay equation $\dd X(t) = X(t-1)\,\dd W(t)$ which does not depend on the initial condition as long 
as it is not identically zero. Due to the singular nature of the equation this property does not follow from 
available results on stochastic delay differential equations. The key technique is to establish existence and uniqueness 
of an invariant measure of the projection of the solution onto the unit sphere in the chosen function space via 
{\em asymptotic coupling} and to prove a Furstenberg-Hasminskii-type formula (like in the finite dimensional case).

  \par\medskip

  \noindent\footnotesize
  \emph{2010 Mathematics Subject Classification} 
  Primary\, 34K50 
  \ Secondary\, 60H10
\end{abstract}

\noindent{\slshape\bfseries Keywords.} Stochastic delay equation, invariant measure, asymptotic coupling, exponential growth rate

\section{Introduction}

Let $W(t),\, t \ge 0$ be linear Brownian motion defined on some probability space $(\Omega,\F,\P)$. In this paper, we will 
study asymptotic properties of the stochastic delay differential equation (SDDE)
\begin{equation}
\label{sde}
\dd X(t)=X(t-1)\,\dd W(t), \; X_0=\eta,
\end{equation}
where
$\eta \in \rC:=\rC ([-1,0],\R)$, and for $t \ge 0$, we define
$$
X_t(s):=X(t+s),\,s \in [-1,0].
$$  
For a fixed chosen norm $\|.\|$ on $\rC$ we will be interested in the question whether for the $\rC$-valued 
solution $X_t$ of the SDDE \eqref{sde} the limit
$$
\lambda(\eta,\omega):=\lim_{t \to \infty} \frac 1t \log \|X_t(\omega)\|
$$
exists almost surely for each $\eta \in \rC$ and is deterministic and independent of $\eta$ as long as $\eta \neq 0$. 
We will show in our main result (Theorem \ref{main}) that there exists a deterministic number $\Lambda \in \R$ such that for every $\eta \neq 0$ we have
$\lambda(\eta,\omega)=\Lambda$ almost surely. In this case, we call $\Lambda$ the exact exponential growth rate of 
\eqref{sde}. To prove Theorem \ref{main}, we  follow the path paved by Furstenberg \cite{F63} and Hasminskii \cite{H67} (see also \cite{AKO84}) 
in the finite dimensional case: 
project the solution of the equation to the unit sphere of an appropriate function space and show that the induced Markov process has a unique invariant 
probability measure $\mu$. Then make sure that for each initial condition on the sphere the empirical measure converges to $\mu$ and represent 
the exponential growth rate as an integral with respect to $\mu$ as in the classical Furstenberg formula.  While the existence of $\mu$ is rather 
easy to show, uniqueness is more involved.  We follow the strategy developed in \cite{HMS11} to show uniqueness of $\mu$. Contrary to \cite{HMS11} we have to 
deal with degenerate equations here requiring a modification of the approach.

Let us first justify our restriction to such a simple equation as \eqref{sde}. In spite of its simplicity, the equation is known to be 
{\em singular} in the sense that there does not exist any modification of the solution which almost surely  depends continuously 
upon the initial condition $\eta$ with respect to the sup-norm (see \cite{M86}). In particular, the results in \cite{MS96} establishing 
a {\em Lyapunov spectrum} and a corresponding decomposition of the state space for a large class of {\em regular} linear SDDEs cannot be applied. 

Since equation \eqref{sde} is the simplest possible singular stochastic delay equation, we believe that it is worthwhile studying its 
asymptotics in some detail. We are optimistic that in principle our method of proof can be generalized to a large class of 
(multidimensional) linear stochastic functional diffential equations but we expect the proofs to be quite a bit more technical.

Clearly, equation \eqref{sde} has a unique solution for each initial condition $X_0=\eta \in C$ and the process 
$X_t$, $t \ge 0$ is a (strong) $\rC -$valued Markov process 
with continuous paths.
We define the following norms on $\rC$:
Let $\|.\|_2$ be the $\rL_2$-norm, $\|.\|$ the sup-norm, and $\Norm{.}$ the $\rM_2$-norm defined as
$$
\Norm{f}^2:=(f(0))^2 + \int_{-1}^0 (f(s))^2\,\dd s 
$$
(the Hilbert space $\rM_2$ consists of all functions from $[-1,0]$ to $\R$ for which this norm is finite).

Our main result in this paper is the following:
\begin{theorem}\label{main}
There exists a number $\Lambda \in \R$ such that for each $\eta \in \rC\backslash\{0\}$, the solution $X$ of equation \eqref{sde} with initial condition 
$\eta$ satisfies 
$$
\lim_{t \to \infty} \frac 1t \log \|X_t(\omega)\| = \lim_{t \to \infty} \frac 1t \log \Norm{X_t(\omega)}=\Lambda\qquad \mbox{ a.s.}.
$$
\end{theorem}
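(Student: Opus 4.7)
The plan is to follow the Furstenberg--Hasminskii strategy outlined in the introduction. First I would like to derive an explicit decomposition of $\log \Norm{X_t}$ into a drift that is a functional on the sphere plus a martingale. Applying Itô's formula to $\Norm{X_t}^2 = X(t)^2 + \int_{t-1}^{t}X(u)^2\,\dd u$ and using $\dd X(t)^2 = 2X(t)X(t-1)\,\dd W(t) + X(t-1)^2\,\dd t$, the delayed term cancels and one gets
\[
\dd \Norm{X_t}^2 = 2 X(t)X(t-1)\,\dd W(t) + X(t)^2\,\dd t.
\]
Itô applied to $\log \Norm{X_t}$, together with the projection $\Phi_t := X_t/\Norm{X_t}$ onto the unit sphere $S$ of $\rM_2$, yields
\[
\log \Norm{X_t} - \log \Norm{X_0} \;=\; \int_0^t Q(\Phi_s)\,\dd s + M_t,\qquad Q(\phi) := \tfrac12 \phi(0)^2 - \phi(0)^2\phi(-1)^2,
\]
with $M_t = \int_0^t \Phi_s(0)\Phi_s(-1)\,\dd W(s)$. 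Before using this one must check that $\Norm{X_t}>0$ for all $t\ge 0$ almost surely whenever $\eta\neq 0$, so that the projection is well defined.

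Next I would set up $\Phi_t$ as a Markov process on $S$ (quotienting by the sign $\pm 1$ if needed) and establish existence of an invariant probability measure $\mu$ on $S$ via the Krylov--Bogolyubov procedure: since $S$ lives in an infinite-dimensional space, I need a tightness estimate for the time-averaged laws of $\Phi_t$, which should come from a moment/regularity bound saying that, when projected, the mass of $\Phi_t(\cdot)$ stays concentrated on a compact set of $\rM_2$. Then uniqueness of $\mu$ is obtained through asymptotic coupling in the spirit of \cite{HMS11}: given two initial conditions on the sphere, I construct, after a suitable Girsanov change, a coupling of the two projected solutions whose distance decays to zero, so that by the generalized Harris-type argument of \cite{HMS11} the two laws of $\Phi_t$ have the same limiting behaviour. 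This is the main obstacle, since the equation \eqref{sde} is severely degenerate (one-dimensional noise driving an infinite-dimensional Markov process) and the coefficient $X(t-1)$ vanishes whenever the delayed value does; the approach of \cite{HMS11} will therefore need to be adapted to this singular, non-uniformly elliptic situation, which is exactly the modification alluded to in the introduction.

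With $\mu$ unique one invokes Birkhoff's ergodic theorem for the Markov semigroup on $S$ (together with an argument that extends convergence of the time average from $\mu$-a.e.\ initial condition to every initial condition in the support of the transition kernel after a fixed time, so as to handle every nonzero $\eta$) to conclude
\[
\frac{1}{t}\int_0^t Q(\Phi_s)\,\dd s \;\longrightarrow\; \Lambda := \int_S Q\,\dd\mu \qquad \text{a.s.}
\]
The martingale part satisfies $\langle M\rangle_t = \int_0^t \Phi_s(0)^2\Phi_s(-1)^2\,\dd s$; once one knows that this integral grows at most linearly (which again follows from the ergodic theorem provided $\int \phi(0)^2\phi(-1)^2\,\dd\mu(\phi)<\infty$), the strong law of large numbers for martingales gives $M_t/t\to 0$ a.s. Combining both, $\frac{1}{t}\log \Norm{X_t}\to \Lambda$ a.s.

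Finally I would transfer the result from $\Norm{\cdot}$ to the sup-norm $\|\cdot\|$. The easy direction $\Norm{f}\le \sqrt{2}\,\|f\|$ gives $\Lambda\le\liminf_{t}\frac{1}{t}\log\|X_t\|$. For the matching upper bound, I would fix an increment of length one and use Itô's inequality (Burkholder--Davis--Gundy) on $[t,t+1]$ to bound $\|X_{t+1}\|$ by a subexponential factor times $\Norm{X_t}$ plus a controllable stochastic integral, so that $\frac{1}{t}\log\|X_t\|\le \Lambda$ a.s. as well. This yields the identity of both limits in Theorem \ref{main}.
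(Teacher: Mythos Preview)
Your overall strategy matches the paper's exactly: the It\^o decomposition of $\log\Norm{X_t}$, existence of $\mu$ via tightness and Krylov--Bogoliubov, uniqueness via an asymptotic coupling adapted from \cite{HMS11}, Birkhoff's theorem for the ergodic average, and the transfer between norms are precisely the steps carried out. (The norm transfer is actually simpler than you suggest: the paper just uses $\Norm{X_t}\le\sqrt{2}\,\|X_t\|\le\sqrt{2}\sup_{t-1\le s\le t}\Norm{X_s}$, no BDG needed.)

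The one genuine gap is the passage from ``$\mu$-a.e.\ initial condition'' to ``every $\eta\in\bar\rC$''. Your phrase ``every initial condition in the support of the transition kernel after a fixed time'' tacitly presumes that $P_T(\eta,\cdot)$ is absolutely continuous with respect to $\mu$, or at least concentrated on the good set $\cM$. In this degenerate, infinite-dimensional setting there is no such smoothing: the noise is one-dimensional and vanishes whenever $X(t-1)=0$, so there is no reason why $P_T(\eta,\cM)=1$ should follow from $\mu(\cM)=1$. The paper closes this gap differently and this is in fact the most delicate part of the whole argument. It first shows that $\mu$ has full support on $\bar\rC$, so $\cM$ is dense. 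Then, given $\phi$, it chooses $\eta\in\cM$ with $\|\eta-\phi\|$ small and reuses the coupling $(X,Y)$ from the uniqueness proof, but now with a \emph{quantitative} control (Lemma~\ref{wichtig}): the Girsanov cost $\int_0^\infty\rho(t)Z^2(t)/Y^2(t-1)\,\dd t$ tends to $0$ in probability as $\|\eta-\phi\|\to 0$. Via a stopped Girsanov change with $L^2$-bounded density, the true solution $\bar Y$ starting at $\phi$ then coincides with $Y$ with probability arbitrarily close to $1$; and a separate computation (Step~1 of the proof) shows $g^2(Y_s/\Norm{Y_s})-g^2(X_s/\Norm{X_s})\to 0$ a.s., which is needed precisely because $g^2$ is unbounded and hence not handled by weak convergence of empirical measures alone. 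Letting the approximation parameter go to zero yields $\phi\in\cM$. This mechanism cannot be replaced by a support-of-the-kernel shortcut.
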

It is easy to see (and will follow from Lemma \ref{lemma1}) that for each $\eta \neq 0$, the process $X_t$ starting at $X_0=\eta$ 
will almost surely never become (identically) zero. Therefore, the process
$$
S_t:=X_t/\Norm{X_t},\; t \ge 0
$$ 
is well-defined. Since our equation \eqref{sde} is linear, the process $S_t,\;t \ge 0$ is a Markov process with continuous paths 
(with respect to both the sup-norm and the $\rM_2$-norm on $\rC$) 
on the unit sphere of $\rM _2$. We will show that this process has a unique invariant probability measure $\mu$.
Suppose for a moment that this has been shown. Then, by It\^o's formula, we have
$$
\dd \Norm{X_t}^2 = X^2(t)\, \dd t + 2 X(t)X(t-1)\,\dd W(t)=\Norm{X_t}^2 \Big( f\Big( \frac {X_t}{ \Norm{X_t}} \Big) \dd t
+ g\Big( \frac {X_t}{ \Norm{X_t}} \Big) \dd W(t) \Big), 
$$
where $f(\eta)=\eta^2(0)$ and $g(\eta)=2\eta(0)\eta(-1)$. Hence,
$$
\dd(\log  \Norm{X_t}) = \Big( \frac 12 f\Big( \frac {X_t}{ \Norm{X_t}} \Big) 
-\frac 14  g^2\Big( \frac {X_t}{ \Norm{X_t}} \Big)\Big)\, \dd t +  \frac 12 g\Big( \frac {X_t}{ \Norm{X_t}} \Big)\,\dd W(t). 
$$
Therefore, by Birkhoff's ergodic theorem,
\begin{equation}\label{Grenzwert}
\lim_{t \to \infty} \frac 1t \log  \Norm{X_t}  = \int \frac 12 f(\eta)-\frac 14 g^2(\eta)\,\dd \mu(\eta)=:\Lambda
\mbox { a.s.}, 
\end{equation}
for $\mu$-almost every initial condition $X_0=\eta$ since $f$ is bounded (and $g^2$ is non-negative) and since the stochastic integral is asymptotically 
negligible compared to its quadratic variation unless the latter process remains bounded as $t \to \infty$ in which case the stochastic integral remains bounded in $t$ as well 
and therefore does not contribute towards the limit in \eqref{Grenzwert}.
This is almost everything we want to show except that we  want to ensure that the limit exists almost surely for {\em each} initial 
condition $\eta$ and not just for $\mu$-almost every $\eta$. 
Since $f$ is bounded, it follows that $\Lambda < \infty$ (in fact $\Lambda \le 1/2$) but it is not immediately obvious that $\Lambda > -\infty$. 
This follows however from the following result which is Theorem 2.3. in \cite{MS97}.
\begin{proposition}\label{aop}
There exists a real number $\Lambda_0$ such that for every $\eta \in  \rC\backslash\{0\}$, we have
$\P\{\liminf_{t \to \infty}\frac 1t \log \Norm{X_t} \ge \Lambda_0\}=1$, where $X$ solves \eqref{sde} with initial condition $\eta$.
\end{proposition}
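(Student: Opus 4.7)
My plan is to work with the discrete-time Markov chain $(X_n)_{n \in \N}$ obtained by sampling the solution at integer times. Since $X(s-1)$ for $s \in [n,n+1]$ is $\F_n$-measurable, the solution on $[n,n+1]$ given $X_n=\eta$ is a Gaussian process in $\rC$ that scales linearly with $\eta$. Consequently the conditional law of $\Norm{X_{n+1}}/\Norm{X_n}$ given $X_n=\eta$ depends only on $\hat\eta:=\eta/\Norm{\eta}$, so it suffices to prove the uniform lower bound
$$
\phi(\hat\eta) := \E\!\left[\log \Norm{X_1(\hat\eta)}^2\right] \ge -K
$$
over all $\hat\eta$ on the $\rM_2$-unit sphere, for some absolute constant $K$.

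The key step is the pointwise bound $\Norm{X_1}^2 \ge X(1)^2$, which reduces the task to estimating $2\,\E[\log|X(1)|]$. Conditionally on $X_0=\hat\eta$, the variable $X(1)=\hat\eta(0)+\int_0^1 \hat\eta(s-1)\,\dd W(s)$ is Gaussian with mean $a:=\hat\eta(0)$ and variance $\int_{-1}^0\hat\eta(s)^2\,\dd s = 1-a^2$. I would then show that for $Z \sim N(0,1)$,
$$
\E\!\left[\log\bigl|a+\sqrt{1-a^2}\,Z\bigr|\right] \ge -K_0
$$
uniformly in $a \in [-1,1]$. This is handled by splitting into $|a|\le\tfrac12$ (where the variance is bounded below, so $\E\log|Z+c|$ is uniformly finite in the bounded parameter $c=a/\sqrt{1-a^2}$) and $|a|\ge\tfrac12$ (where a one-sided Chebyshev estimate forces $|X(1)|$ to stay away from zero with high probability). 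A parallel Gaussian-moment computation on the unit sphere yields a uniform conditional variance bound $\var[\log\Norm{X_{n+1}}^2\mid X_n]\le C$.

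With these two estimates in hand, the centred process
$$
M_n := \log\Norm{X_n}^2 - \sum_{k=0}^{n-1} \E\!\left[\log\Norm{X_{k+1}}^2-\log\Norm{X_k}^2 \bigm| \F_k\right]
$$
is a martingale whose increments have conditional variances bounded by $C$; the martingale strong law of large numbers then gives $M_n/n\to 0$ a.s. Combined with the drift lower bound, this yields $\liminf_{n\to\infty}\tfrac{1}{n}\log\Norm{X_n}\ge -K/2$ a.s., and the passage from integer times to continuous $t$ is done by controlling $\sup_{n\le t\le n+1}|\log\Norm{X_t}^2-\log\Norm{X_n}^2|$ via the explicit Gaussian description of $X$ on $[n,n+1]$ together with Borel--Cantelli.

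The main obstacle is the Gaussian bound of the second paragraph: the growth rate could, a priori, deteriorate when $\hat\eta$ concentrates on the evaluation functional at $0$, i.e.\ $a=\hat\eta(0)\to\pm 1$, because the conditional variance of $X(1)$ then degenerates to $0$. The resolution is that in precisely this regime $|X(1)|$ is forced to remain close to $1$, so $\log|X(1)|$ stays bounded below; this is what makes the case analysis close and produces a single constant $K$ valid on the entire unit sphere.
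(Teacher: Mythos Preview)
The paper does not give a self-contained proof of this proposition; it simply quotes Theorem~2.3 of \cite{MS97}. From the companion statement Proposition~\ref{aop1} (which is described as ``Step~1 in the proof of Theorem~2.3 in \cite{MS97}''), one sees that the argument there proceeds via a uniform negative fractional moment bound $\E\big[\Norm{X_1}^{-1/2}\big]\le K\Norm{\eta}^{-1/2}$: iterating along integer times and applying Markov's inequality gives summable tail probabilities for $\{\Norm{X_n}\le \ee^{-\beta n}\}$ as soon as $\beta>2\log K$, and Borel--Cantelli then yields the $\liminf$ bound.

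Your route is genuinely different and essentially correct. You replace negative moments of $\Norm{X_1}$ by first and second moments of $\log\Norm{X_1}$. The uniform lower bound on $\E_{\hat\eta}[\log|X(1)|]$ over the $\rM_2$-unit sphere is valid: writing $X(1)\sim N(a,1-a^2)$ with $a=\hat\eta(0)$, the density of $X(1)$ near zero is bounded uniformly in $a\in[-1,1]$ (when $1-a^2$ is small the mean $|a|$ is bounded away from zero, which kills the Gaussian density at the origin), so $\E[(\log|X(1)|)_-]$ and even $\E[(\log|X(1)|)_-^2]$ are uniformly finite. The positive part of the second moment is controlled by Gaussian moments of $\|X_1\|$, giving the uniform conditional variance bound, and the martingale SLLN then delivers the discrete-time statement exactly as you say.

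The one place your sketch is thin is the passage from integer times to continuous $t$. To run Borel--Cantelli you need something like $\sup_{\hat\eta}\P_{\hat\eta}\big(\inf_{0\le t\le 1}\Norm{X_t}\le\epsilon\big)\le C\epsilon^{\gamma}$, and this is not quite a one-line ``Gaussian description'' argument: for $t\in[0,1]$ the quantity $\Norm{X_t}^2$ mixes the deterministic piece $\int_{t-1}^0\hat\eta^2$ with the random piece $X(t)^2+\int_0^t X^2$, and the deterministic piece can vanish at $t=1$ while the random piece vanishes at $t=0$, so one must interleave the two across $t$. This can be done, but it is the step that needs the most writing out.

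A practical difference worth noting: the negative-moment approach of \cite{MS97} produces Proposition~\ref{aop1} as an intermediate step, and that estimate is used independently later in the paper (in the proof of Lemma~\ref{wichtig}). Your log-moment argument, while cleaner in spirit and closer to the Furstenberg--Hasminskii framework used in the rest of the paper, does not yield that quantitative byproduct.
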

Note that as a consequence Proposition \ref{aop}, it follows that the function $g$ is square integrable with respect to $\mu$.

It is easy to see that then, we also have 
$$
\lim_{t \to \infty} \frac 1t \log  \|X_t\|=\Lambda \mbox { a.s.},
$$
since 
$$
\Norm{X_t} \le \sqrt{2} \|X_t\| \le \sqrt{2} \sup_{t-1 \le s \le t} \Norm{X_s}
$$
for all $t \ge 0$.

In order to prove Theorem \ref{main}, it therefore remains to prove existence and uniqueness of an invariant probability measure $\mu$ 
of the Markov process $S_t,\, t\ge 0$ and to show that \eqref{Grenzwert} holds for {\em each} initial condition $\eta \in \rC \backslash \{0\}$.


We will need the following result which is Step 1 in the proof of Theorem 2.3 in \cite{MS97}.
\begin{proposition}\label{aop1}
There exists a real number $K$ such that for every $\eta \in  \rC$, we have
$\E \big(\Norm{X_1}^{-1/2}\big) \le K\Norm{\eta}^{-1/2}$, where $X$ solves \eqref{sde} with initial condition $\eta$.
\end{proposition}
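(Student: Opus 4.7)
The plan is to reduce the statement, via the linearity of \eqref{sde}, to a uniform negative-moment estimate for a one-dimensional Gaussian random variable, and then to establish this estimate by splitting into two regimes.

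Since $X^{c\eta}=cX^{\eta}$ for every scalar $c$, both sides of the claimed inequality scale like $|c|^{-1/2}$, so I may assume $\Norm{\eta}=1$. On $[0,1]$ the equation reads $\dd X(t)=\eta(t-1)\,\dd W(t)$, which integrates to $X(t)=\eta(0)+\int_0^t\eta(s-1)\,\dd W(s)$; in particular $X(1)$ is Gaussian with mean $\mu:=\eta(0)$ and variance $\sigma^2:=\int_{-1}^0\eta(u)^2\,\dd u$, and the normalization gives $\mu^2+\sigma^2=1$. Since $\Norm{X_1}^2\ge X_1(0)^2=X(1)^2$, it suffices to prove
$$\sup_{\mu^2+\sigma^2=1}\E\!\left[|X(1)|^{-1/2}\right]\le K$$
for some absolute constant $K$.

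To establish the uniform bound I would split into two cases. In the case $\sigma^2\ge 1/2$ the density of $X(1)$ is uniformly bounded by $\pi^{-1/2}$; together with $\int_{-1}^{1}|y|^{-1/2}\,\dd y=4$ and the crude estimate $|y|^{-1/2}\le 1$ on $\{|y|>1\}$, splitting the defining integral at $|y|=1$ gives an absolute bound. In the case $\sigma^2<1/2$ I instead have $|\mu|\ge 1/\sqrt 2$; assuming $\mu>0$ by symmetry, I would work on the event $A:=\{|X(1)-\mu|\le\mu/2\}$. On $A$ one has $|X(1)|\ge\mu/2\ge 2^{-3/2}$, contributing at most $2^{3/4}$ to the expectation. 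On $A^c$, the Gaussian tail bound $\P(A^c)\le 2\ee^{-\mu^2/(8\sigma^2)}$ combined with H\"older's inequality with exponents $3/2$ and $3$ gives
$$\E\!\left[|X(1)|^{-1/2}\1_{A^c}\right]\le\E\!\left[|X(1)|^{-3/4}\right]^{2/3}\P(A^c)^{1/3},$$
and a further density-splitting estimate bounds $\E[|X(1)|^{-3/4}]$ polynomially in $1/\sigma$, which is comfortably absorbed by the exponential factor $\exp(-\mu^2/(24\sigma^2))\le\exp(-1/(48\sigma^2))$.

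The main obstacle, and the reason a naive uniform density estimate fails on its own, is precisely this small-$\sigma$ regime: the density of $X(1)$ blows up like $\sigma^{-1}$ near the singularity $|y|^{-1/2}$ of the integrand, and the saving grace is that the constraint $\mu^2+\sigma^2=1$ forces $|\mu|$ to stay bounded away from zero exactly when $\sigma$ is small, producing the exponential cancellation that dominates the polynomial blow-up. Once the uniform bound in the two regimes is combined into a single constant $K$, the statement of the proposition follows by unwinding the scaling.
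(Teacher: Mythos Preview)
Your argument is correct. The reduction by linearity to $\Norm{\eta}=1$, the lower bound $\Norm{X_1}\ge |X(1)|$, and the fact that $X(1)\sim\mathcal N(\mu,\sigma^2)$ with $\mu^2+\sigma^2=1$ are all valid, and the two-regime Gaussian estimate goes through as you describe: in the large-$\sigma$ regime the density bound suffices, and in the small-$\sigma$ regime the constraint forces $|\mu|\ge 1/\sqrt 2$, so the H\"older/tail combination produces a factor $\sigma^{-2/3}\ee^{-1/(48\sigma^2)}$ which is uniformly bounded. One minor remark: for continuous $\eta$ with $\Norm{\eta}=1$ the case $\sigma=0$ cannot occur (it would force $\eta\equiv 0$), but your estimates are uniform as $\sigma\downarrow 0$ anyway, so nothing is lost.

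As for comparison with the paper: the paper does not give its own proof of this proposition at all. It is simply imported verbatim from \cite{MS97}, where it appears as Step~1 in the proof of Theorem~2.3. Your argument therefore supplies a self-contained elementary proof where the paper only quotes a reference. The reduction to a single Gaussian moment via $\Norm{X_1}\ge |X(1)|$ is the natural shortcut for this particular equation; the cited result in \cite{MS97} is stated in a somewhat more general framework, but for the scalar equation \eqref{sde} your direct computation is both shorter and more transparent.
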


Upper and lower bounds for the exponential growth rate $\Lambda$ have been obtained (even for equations with an additional factor $\sigma$ in front of $\dd W(t)$) 
in \cite{MS97} and \cite{Sch05} (in those papers the existence of the limit  \eqref{Grenzwert} was not yet known: the authors obtained upper deterministic 
bounds for the $\limsup$ and lower deterministic bounds for the $\liminf$).

\section{Existence of an invariant measure}

In this section, $X$ is always the solution of equation \eqref{sde} -- possibly with a random initial condition which is independent of 
the $\sigma-$algebra generated by the driving Wiener process $W$. Let $\F_t$, $t \ge 0$ be the filtration (right-continuous and complete) 
generated by the initial condition and the Wiener process $W$. We will always assume that the initial condition satisfies $\E\|X_0\|^2< \infty$ 
which ensures that all moments appearing below will be finite and conditional expectations well-defined. As before, we define $S_t:=X_t/\Norm{X_t}$. 
We need the following lemmas.

\begin{lemma} \label{lemma1}
There exists some $c_1>0$ such that for each $t \ge 1$ and $\alpha \ge 0$ we have
$$
 \P\{|X(t)|\le \alpha \Norm{X_{t-1}} \big| \F_{t-1}\} \le c_1 \alpha \mbox{ a.s.}.
$$
\end{lemma}
\begin{proof} Let $N$ be a standard normal random variable. Abbreviate $a:=X(t-1)$, $\sigma:=\|X_{t-1}\|_2$. Then, for $\alpha \le 1/2$,
\begin{align*}
\P\{&|X(t)|\le \alpha \Norm{X_{t-1}} \big| \F_{t-1}\}\\
&= \P\big\{\big| X(t-1)+\int_{t-1}^t X(s-1)\, \dd W(s) \big| \le \alpha \Norm{X_{t-1}} \big| \F_{t-1}\big\}\\
&\le\P\{|a+\sigma N| \le \alpha(|a|+\sigma)\big| \F_{t-1} \}\\
&\le \sup_{x\ge 0}\P\{N \in [x-\alpha(x+1),x+\alpha(x+1)]\}\\
&\le \sup_{x\ge 0}  \Big\{2\alpha (x+1) \frac 1{\sqrt{2\pi}}\exp\big\{-\frac 12 (((1-\alpha)x-\alpha)^+)^2\big\}\Big\}\\
&\le \alpha \sup_{x\ge 0} \Big\{2 (x+1) \frac 1{\sqrt{2\pi}}\exp\big\{-\frac 14 ((x-1)^+)^2\big\}\Big\}\\
&= c \alpha,
\end{align*}
since the supremum is finite. Defining $c_1:= c \vee 2$, the assertion follows.
\end{proof}

\begin{lemma} \label{lemma2}
For each $t \ge 1$ and each $\F_{t-1}$-measurable positive random variable    $\xi$, we have
$$
\P\{\|X_t\| \ge \xi \Big| \F_{t-1}\} \le \frac 1{\xi^2} \Big(X^2(t-1)+4\|X_{t-1}\|_2^2 \Big) \le \frac{4}{\xi^2}\Norm{X_{t-1}}^2\mbox{ a.s.}.
$$
\end{lemma}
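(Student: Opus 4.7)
The plan is to regard $(X(u))_{u \in [t-1,t]}$, conditionally on $\F_{t-1}$, as a continuous $L^2$-martingale starting at the conditionally deterministic value $X(t-1)$, and then apply Doob's maximal inequality to the non-negative submartingale $X^2$.

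First, I would note that integrating \eqref{sde} over $[t-1,u]$ gives, for every $u \in [t-1,t]$,
$$
X(u) = X(t-1) + \int_{t-1}^u X(s-1)\, \dd W(s).
$$
For $s \in [t-1,t]$ one has $s-1 \le t-1$, so the integrand is $\F_{t-1}$-measurable. Hence, conditionally on $\F_{t-1}$, the process $(X(u))_{u \in [t-1,t]}$ is a continuous square-integrable martingale, and $(X^2(u))_{u \in [t-1,t]}$ is a continuous non-negative submartingale. Because $X_t(s) = X(t+s)$ for $s \in [-1,0]$, we have $\|X_t\| = \sup_{u \in [t-1,t]} |X(u)|$, so $\{\|X_t\| \ge \xi\} = \{\sup_{u} X^2(u) \ge \xi^2\}$.

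Next, I would apply Doob's $L^1$ maximal inequality to $X^2$, in its conditional form with the $\F_{t-1}$-measurable threshold $\xi^2$, obtaining
$$
\xi^2\, \P\{\|X_t\| \ge \xi \mid \F_{t-1}\} \le \E\!\left[X^2(t) \mid \F_{t-1}\right].
$$
The conditional It\^o isometry then yields
$$
\E\!\left[X^2(t) \mid \F_{t-1}\right] = X^2(t-1) + \int_{t-1}^{t} X^2(s-1)\, \dd s = X^2(t-1) + \|X_{t-1}\|_2^2,
$$
which is in particular bounded above by $X^2(t-1) + 4\|X_{t-1}\|_2^2$, proving the first inequality. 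The second inequality is purely algebraic: since $\Norm{X_{t-1}}^2 = X^2(t-1) + \|X_{t-1}\|_2^2$, one has $X^2(t-1) + 4\|X_{t-1}\|_2^2 \le 4(X^2(t-1) + \|X_{t-1}\|_2^2) = 4\Norm{X_{t-1}}^2$.

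The argument is essentially routine; the only mild technicality is handling the random $\F_{t-1}$-measurable threshold $\xi$ in Doob's inequality, which is justified by first conditioning on $\F_{t-1}$ (so that $\xi$ becomes a deterministic constant) and then applying the standard continuous-time submartingale maximal inequality to $X^2$ on the interval $[t-1,t]$.
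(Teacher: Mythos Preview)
Your proof is correct and follows essentially the same route as the paper: Doob's maximal inequality for the martingale $(X(u))_{u\in[t-1,t]}$ combined with the conditional It\^o isometry and the trivial algebraic bound. The only cosmetic difference is that you apply the probability form of Doob's inequality to the submartingale $X^2$ (yielding the sharper intermediate bound $X^2(t-1)+\|X_{t-1}\|_2^2$), whereas the paper first uses Chebyshev and then Doob's $L^2$ inequality $\E\big(\|X_t\|^2\mid\F_{t-1}\big)\le X^2(t-1)+4\|X_{t-1}\|_2^2$.
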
 
\begin{proof} Using Doob's $\rL^2$-martingale inequality, we get
$$
\P\{\|X_t\| \ge \xi \Big| \F_{t-1}\} \le  \frac 1{\xi^2} \E\big( \|X_t\|^2| F_{t-1}\big) \le \frac 1{\xi^2} \big( X^2(t-1)+4\|X_{t-1}\|_2^2\big) 
\le \frac{4}{\xi^2}\Norm{X_{t-1}}^2 \mbox{ a.s.}.
$$
\end{proof}

We regard the process $S_t,\,t \ge 0$ as a Markov process with state space $\bar \rC$ defined as the intersection of $\rC$ and the unit sphere of $\rM_2$ 
equipped with the supremum norm $\|.\|$. Then $S_t,t \ge 0$ is a Feller process with values in the Polish space $\bar \rC$ (with complete metric induced by 
the supremum norm). 

\begin{proposition} \label{tight}
For any (possibly random) $\rC$-valued initial condition $X_0$  which is nonzero almost surely, the laws $\cL(S_t),\,t \ge 2$ are tight in $\bar \rC$.
\end{proposition}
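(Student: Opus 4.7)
The plan is to verify the Arzelà--Ascoli compactness criterion in probability, uniformly in $t\ge 2$: for every $\epsilon>0$ I will exhibit $R>0$ and $\delta>0$, both independent of $t$, such that $\P\{\|S_t\|>R\}<\epsilon$ and $\P\{\omega(S_t,\delta)>\epsilon\}<\epsilon$, where $\omega(f,\delta):=\sup_{|s-s'|\le\delta}|f(s)-f(s')|$. The closed balls of $\|\cdot\|+\omega(\cdot,\delta_n)$-bounded functions for a sequence $\delta_n\downarrow 0$ form a tight family of compacts in $\bar\rC$.

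The sup-norm bound is obtained by combining the two preceding lemmas. Lemma \ref{lemma1} with parameter $\alpha$ gives $|X(t)|\ge \alpha\Norm{X_{t-1}}$, and hence $\Norm{X_t}\ge \alpha\Norm{X_{t-1}}$, with probability at least $1-c_1\alpha$; Lemma \ref{lemma2} applied with $\xi=M\Norm{X_{t-1}}$ gives $\|X_t\|\le M\Norm{X_{t-1}}$ with probability at least $1-4/M^2$. On the intersection, $\|S_t\|=\|X_t\|/\Norm{X_t}\le M/\alpha$, and choosing $M$ large and $\alpha$ small yields an arbitrarily small failure probability, uniformly in $t\ge 1$.

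For equicontinuity I work conditionally on $\F_{t-1}$. For $u\in[t-1,t]$ the coefficient $X(u-1)$ is $\F_{t-1}$-measurable, so $M(s):=X(t+s)-X(t-1)=\int_{t-1}^{t+s}X(u-1)\,\dd W(u)$, $s\in[-1,0]$, is (conditionally) a continuous Gaussian martingale with quadratic variation $\Phi(s):=\int_{t-1}^{t+s}X^2(u-1)\,\dd u\le \|X_{t-1}\|_2^2\le \Norm{X_{t-1}}^2=:\sigma^2$. Dambis--Dubins--Schwarz together with Brownian scaling produce $\tilde B$, whose conditional law given $\F_{t-1}$ is standard Wiener, such that $M(s)/\sigma=\tilde B(\tilde\Phi(s))$ with $\tilde\Phi(s):=\Phi(s)/\sigma^2\in[0,1]$. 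The additive constant $X(t-1)/\sigma$ does not enter the modulus, so $\omega(X_t,\delta)/\sigma=\omega(\tilde B\circ\tilde\Phi,\delta)\le \omega_{\tilde B}(\omega(\tilde\Phi,\delta))\le \omega_{\tilde B}(\|S_{t-1}\|^2\delta)$, the last step using $|\tilde\Phi(s)-\tilde\Phi(s')|\le(\|X_{t-1}\|/\sigma)^2|s-s'|=\|S_{t-1}\|^2|s-s'|$.

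To close the argument, fix $\epsilon>0$, choose $\alpha$ with $c_1\alpha<\epsilon/3$, apply the sup-norm step at time $t-1\ge 1$ to find $R_0$ with $\P\{\|S_{t-1}\|>R_0\}<\epsilon/3$, and use uniform continuity of standard Brownian motion on $[0,1]$ to pick $\delta_1$ with $\P\{\omega_{\tilde B}(\delta_1)>\alpha\epsilon\}<\epsilon/3$; this probability is uniform in $t$ because the conditional law of $\tilde B$ is standard Wiener for every $t$. Setting $\delta:=\delta_1/R_0^2$, the intersection $\{\Norm{X_t}\ge \alpha\sigma\}\cap\{\|S_{t-1}\|\le R_0\}\cap\{\omega_{\tilde B}(\delta_1)\le \alpha\epsilon\}$ has probability at least $1-\epsilon$, and on it $\omega(S_t,\delta)\le \omega(X_t,\delta)/(\alpha\sigma)\le \alpha^{-1}\omega_{\tilde B}(\delta_1)\le \epsilon$. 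The main technical point is the conditional Dambis--Dubins--Schwarz step with an $\F_{t-1}$-measurable time change, which must deliver a $\tilde B$ whose modulus bound on $[0,1]$ is genuinely $t$-uniform; once that is secured, the three pieces combine without further subtlety.
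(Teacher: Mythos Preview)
Your proof is correct and follows essentially the same route as the paper: Arzel\`a--Ascoli combined with a Dambis--Dubins--Schwarz representation of the increments $X(t+s)-X(t-1)$ (whose integrand is $\F_{t-1}$-measurable), together with Lemmas \ref{lemma1} and \ref{lemma2}. The only organizational difference is that the paper normalizes the time change by $\|X_{t-1}\|^2$ so that it is $1$-Lipschitz and then bounds $\P\{\Norm{X_t}\le\alpha\|X_{t-1}\|\}$ by a three-term chain through $\Norm{X_{t-1}}$ and $\Norm{X_{t-2}}$, whereas you normalize by $\Norm{X_{t-1}}^2$ and instead control the resulting random Lipschitz constant $\|S_{t-1}\|^2$ by bootstrapping from your sup-norm bound at time $t-1$; the ingredients and the reason $t\ge 2$ is needed are the same.
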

\begin{proof}
Let $\cM:=\{\cL(S_t),t \ge 2\}$. 
By the Arzel\`a-Ascoli Theorem, we have to show that 
\begin{itemize}
\item[(i)] $\lim_{a \to \infty} \sup_{\nu \in \cM} \nu (\{f \in \bar \rC:\,|f(0)|\ge a\})=0$
\item[(ii)] For every $\varepsilon >0$ we have
$$
\lim_{\delta \downarrow 0}\sup_{\nu \in \cM} \nu(\{f \in \bar \rC: \sup\{|f(t)-f(s)|: s,t \in [-1,0],\,|t-s|\le \delta \}\ge \varepsilon\})=0.                 
$$
\end{itemize}
(i) holds since $\P\{|S_t(0)|\ge a\} = \P\{|X(t)|/\Norm{X_t} \ge a\}=0$ whenever $a>1$ and $t \ge 0$. 

It remains to verify (ii). Fix $t \ge 2$. For $\alpha,\delta,\varepsilon>0$ we have
\begin{align*}
\P\Big\{&\sup_{-1 \le s \le u \le 0,u-s \le \delta} |S_t(u)-S_t(s)| \ge \varepsilon \Big\}= 
\P\Big\{\sup_{0 \le s \le u \le 1,u-s \le \delta} |H(u)-H(s)|\ge \varepsilon\Norm{X_t}\Big\} \\ 
&\le \P\Big\{\sup_{0 \le s \le u \le 1,u-s \le \delta} |H(u)-H(s)|\ge \varepsilon \alpha \|X_{t-1}\|\Big\} + \P\Big\{\Norm{X_t} \le \alpha \|X_{t-1}\|\Big\}, 
\end{align*}
where 
$$
H(r):=\int_{t-1}^{t-1+r} X(v-1)\,\dd W(v), \, r \ge 0
$$
is a local martingale which has a representation $H(r)=B(\tau(r))$ for a Brownian motion $B$ which is independent of $\cF_{t-1}$, where 
$$
\tau(r)=\int_{t-1}^{t-1+r} X^2(v-1)\,\dd v, \, r \ge 0,
$$
so $0 \le \tau'(r) = X^2(t-2+r) \le \|X_{t-1}\|^2$ for $r \in [0,1]$. Hence
\begin{align*}
\P\Big\{\sup_{0 \le s \le u \le 1,u-s \le \delta} &|H(u)-H(s)|\ge \varepsilon \alpha\|X_{t-1}\|\Big|\cF_{t-1}\Big\} \\
&\le \P\Big\{\sup_{0 \le s \le u \le 1,u-s \le \delta} \Big|\big(B(\|X_{t-1}\|^2u\big)- \big(B(\|X_{t-1}\|^2s\big)\Big| \ge \varepsilon \alpha \|X_{t-1}\|\Big|\cF_{t-1}\Big\}\\
&=\P\Big\{\sup_{0 \le s \le u \le 1,u-s \le \delta} |B(u)-B(s)| \ge \varepsilon \alpha \Big\}. 
\end{align*}
Further,
\begin{align*}
\P&\Big\{\Norm{X_t} \le \alpha \|X_{t-1}\|\Big|\cF_{t-2}\Big\} \le \P\Big\{|X(t)| \le \alpha \|X_{t-1}\|\Big|\cF_{t-2}\Big\}\\
&\le \P \Big\{|X(t)| \le \alpha^{1/3} \Norm{X_{t-1}}\Big|\cF_{t-2}\Big\} + \P \Big\{\Norm{X_{t-1}} \le \alpha^{1/3} \Norm{X_{t-2}}\Big|\cF_{t-2}\Big\}\\ 
& \hspace{.5cm} + \P \Big\{\Norm{X_{t-2}} \le \alpha^{1/3} \|X_{t-1}\|\Big|\cF_{t-2}\Big\}\\
&\le \alpha^{1/3}c_1 + \alpha^{1/3}c_1 + 4 \alpha^{2/3},
\end{align*}
where we used Lemma \ref{lemma1} (for the first two summands) and Lemma \ref{lemma2} (for the last summand) in the final step.
For fixed $\alpha, \varepsilon>0$ we obtain
\begin{align*}
\limsup_{\delta \downarrow 0}&\sup_{\nu \in \cM} \nu(\{f \in \rC: \sup\{|f(t)-f(s)|: s,t \in [-1,0],\,|t-s|\le \delta \}\ge \varepsilon\})\\
&=\limsup_{\delta \downarrow 0}\sup_{t\ge 2} \P\Big\{\sup_{-1 \le s \le u \le 0,u-s \le \delta} |S_t(u)-S_t(s)| \ge \varepsilon \Big\}\\
&\le 2c_1\alpha^{1/3}+4 \alpha^{2/3}.
\end{align*}
The assertion follows since $\alpha>0$ can be chosen arbitrarily small. 
\end{proof}

\begin{remark}
The proof of the previous proposition shows that tightness holds even uniformly with respect to the initial condition, i.e.~the family 
$\cL(S^{(\eta)}_t),\,t \ge 2,\,\eta \in \rC\backslash \{0\}$ is tight. Clearly, the family $\cL(S_t),\,t \ge 0$ is also tight for each fixed 
initial condition $\eta \in \rC\backslash \{0\}$ but not uniformly with respect to $\eta$.  
\end{remark}

\begin{proposition}\label{existence}
The $\bar \rC-$valued Markov process $S_t$, $t \ge 0$ has an invariant probability measure $\mu$. 
\end{proposition}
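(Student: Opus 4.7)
The plan is to invoke the Krylov--Bogolyubov procedure. The preceding material has already provided the two ingredients that could have caused difficulty: the Feller property of $S_t$ on the Polish space $\bar\rC$ (asserted just before the proposition) and tightness of the time-$t$ laws for $t\ge 2$ (Proposition \ref{tight}). What remains is the standard averaging argument.

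Fix a deterministic $\eta\in\bar\rC$ and write $P_t(\eta,\cdot):=\cL(S_t\mid S_0=\eta)$. For $T>2$, define the Ces\`aro-type empirical measure
$$
\mu_T:=\frac{1}{T-2}\int_2^T P_t(\eta,\cdot)\,\dd t
$$
on $\bar\rC$. By Proposition \ref{tight} the family $\{P_t(\eta,\cdot):t\ge 2\}$ is tight, hence so is $\{\mu_T:T>2\}$ (since tightness is preserved under convex averaging). Prokhorov's theorem then produces a sequence $T_n\to\infty$ along which $\mu_{T_n}$ converges weakly to some probability measure $\mu$ on $\bar\rC$.

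To see that $\mu$ is invariant, fix $s\ge 0$ and $f\in \rC_b(\bar\rC)$, and let $(P_sf)(x):=\int f(y)\,P_s(x,\dd y)$. The Feller property gives $P_sf\in \rC_b(\bar\rC)$, so weak convergence along $T_n$ yields $\int P_sf\,\dd\mu_{T_n}\to\int P_sf\,\dd\mu$ and $\int f\,\dd\mu_{T_n}\to\int f\,\dd\mu$. On the other hand, Fubini and the semigroup identity imply
$$
\int P_sf\,\dd\mu_T-\int f\,\dd\mu_T=\frac{1}{T-2}\Big(\int_{T}^{T+s}(P_tf)(\eta)\,\dd t-\int_{2}^{2+s}(P_tf)(\eta)\,\dd t\Big),
$$
which is bounded in absolute value by $2s\|f\|_\infty/(T-2)\to 0$. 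Combining the two observations gives $\int P_sf\,\dd\mu=\int f\,\dd\mu$ for every such $f$, which is invariance.

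There is no real obstacle here, since both Feller continuity and tightness are already in place; the proposition is a direct application of Krylov--Bogolyubov. The only minor point is that the time average is started at $t=2$ so that Proposition \ref{tight} applies verbatim, but this initial shift does not affect the limit.
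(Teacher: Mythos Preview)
Your proof is correct and follows exactly the paper's approach: the paper simply invokes the Krylov--Bogoliubov theorem using Proposition \ref{tight} and the Feller property, while you have written out the standard averaging argument underlying that theorem. The only difference is level of detail, not strategy.
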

\begin{proof}
This follows from the Krylov-Bogoliubov theorem (see \cite{DZ96}, Theorem 3.1.1) by Proposition \ref{tight} and 
the fact that the process $S_t$, $t \ge 0$ is Feller.
\end{proof}


For later use, we formulate the following straightforward corollary of Lemma \ref{lemma1} and Lemma \ref{lemma2}.

\begin{corollary}\label{coro1}
For $\gamma>0$ and $t \ge 1$, we have
$$
\P \big( |X(t)| \le \gamma \|X_t\| \big| \cF_{t-1}\big) \le c_1 \sqrt{\gamma} + 4 \gamma \mbox{ a.s.}. 
$$
\end{corollary}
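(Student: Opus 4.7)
The plan is to reduce the event $\{|X(t)| \le \gamma \|X_t\|\}$ to events on which either $\|X_t\|$ is controlled by an $\F_{t-1}$-measurable quantity (so Lemma \ref{lemma1} applies directly) or $\|X_t\|$ is unusually large compared to $\Norm{X_{t-1}}$ (so Lemma \ref{lemma2} applies). Concretely, I would introduce a parameter $\beta > 0$ to be optimized and split
$$
\{|X(t)| \le \gamma \|X_t\|\} \subseteq \{|X(t)| \le \gamma \beta \Norm{X_{t-1}}\} \cup \{\|X_t\| \ge \beta \Norm{X_{t-1}}\}.
$$

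On the first event, the bound $\|X_t\| < \beta \Norm{X_{t-1}}$ combined with $|X(t)| \le \gamma \|X_t\|$ gives $|X(t)| \le \gamma \beta \Norm{X_{t-1}}$, and Lemma \ref{lemma1} (with $\alpha = \gamma \beta$) bounds its conditional probability by $c_1 \gamma \beta$. For the second event, applying Lemma \ref{lemma2} with the $\F_{t-1}$-measurable choice $\xi = \beta \Norm{X_{t-1}}$ yields a conditional probability at most $4/\beta^2$. Summing the two contributions gives, almost surely,
$$
\P\bigl(|X(t)| \le \gamma \|X_t\| \,\big|\, \F_{t-1}\bigr) \le c_1 \gamma \beta + \frac{4}{\beta^2}.
$$

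Finally, I optimize (or simply choose) $\beta = \gamma^{-1/2}$, which makes the first term $c_1 \sqrt{\gamma}$ and the second $4\gamma$, giving the desired bound $c_1 \sqrt{\gamma} + 4\gamma$. There is no real obstacle here: the estimate is essentially a conditional union bound with a well-chosen cut-off, and both inputs (Lemmas \ref{lemma1} and \ref{lemma2}) are already formulated in the conditional form required. The only minor point to check is that replacing $\|X_t\|$ by the $\F_{t-1}$-measurable quantity $\beta \Norm{X_{t-1}}$ on the first event is legitimate, which is immediate from the inclusion displayed above.
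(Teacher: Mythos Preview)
Your proof is correct and is precisely the argument the paper has in mind: the corollary is stated there as a ``straightforward corollary of Lemma \ref{lemma1} and Lemma \ref{lemma2}'' with no further details, and the only natural way to combine those two lemmas is exactly your union-bound split with threshold $\beta\Norm{X_{t-1}}$ and the choice $\beta=\gamma^{-1/2}$.
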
 

The previous corollary immediately implies the following one.

\begin{corollary}\label{coro2} There exists some $c_2 \in (0,1)$ such that for every $t \ge 1$, we have 
$$
\P\{\bar W^*(\|X_t\|_2^2)\le \frac 13 |X(t)|\big|\F_{t-1}\} \ge c_2 \mbox{ a.s.}, 
$$
where $\bar W$ is a Wiener process which is independent of $\F_t$ and $\bar W^*(s):=\sup_{u \in [0,s]}|\bar W(u)|$.
\end{corollary}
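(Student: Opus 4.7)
The plan is to combine Corollary \ref{coro1}, which controls the ratio $|X(t)|/\|X_t\|$ from below with positive probability, with the Brownian scaling property of $\bar W$. The key observation is that the event in question is essentially asking for a standard Brownian motion (after scaling) to stay bounded by a quantity proportional to $|X(t)|/\|X_t\|_2$ on a unit time interval.

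First I would choose $\gamma_0>0$ small enough that $c_1\sqrt{\gamma_0}+4\gamma_0\le 1/2$, where $c_1$ is the constant from Corollary \ref{coro1}. Since $\|X_t\|_2\le \|X_t\|$ (the $L^2$-norm on $[-1,0]$ is dominated by the sup-norm), Corollary \ref{coro1} yields
$$
\P\bigl(|X(t)|\le \gamma_0\|X_t\|_2\,\big|\,\F_{t-1}\bigr)\le c_1\sqrt{\gamma_0}+4\gamma_0\le \tfrac12.
$$
Thus the $\F_t$-measurable ``good'' event $A_t:=\{|X(t)|>\gamma_0\|X_t\|_2\}$ satisfies $\P(A_t|\F_{t-1})\ge 1/2$ almost surely.

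Next I would exploit that $\bar W$ is independent of $\F_t$ and that $\bar W^*(s)\stackrel{d}{=}\sqrt{s}\,\bar W^*(1)$. Conditioning on $\F_t$, on the event $A_t$ (where $\|X_t\|_2>0$, with the trivial case $\|X_t\|_2=0$ making the event in question automatic), we get
$$
\P\bigl\{\bar W^*(\|X_t\|_2^2)\le \tfrac13|X(t)|\,\big|\,\F_t\bigr\}=\P\Bigl\{\bar W^*(1)\le \frac{|X(t)|}{3\|X_t\|_2}\Bigr\}\ge \P\bigl\{\bar W^*(1)\le \gamma_0/3\bigr\}=:p_0>0,
$$
where the last probability is taken with respect to the law of $\bar W^*(1)$ (independent of $\F_t$). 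Finally, the tower property applied conditional on $\F_{t-1}$ gives
$$
\P\bigl\{\bar W^*(\|X_t\|_2^2)\le \tfrac13|X(t)|\,\big|\,\F_{t-1}\bigr\}\ge \E\bigl[\1_{A_t}\,p_0\,\big|\,\F_{t-1}\bigr]\ge \tfrac12 p_0,
$$
so that $c_2:=p_0/2\in(0,1)$ works.

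I do not expect a serious obstacle: the estimate is an immediate concatenation of Corollary \ref{coro1} with a one-line Brownian scaling argument. The only minor subtleties are to verify that $\|X_t\|_2\le \|X_t\|$ so that Corollary \ref{coro1} transfers to the $L^2$-norm, to keep the sigma-algebras $\F_{t-1}\subset \F_t$ and the independence of $\bar W$ from $\F_t$ straight when passing between the two conditional expectations, and to handle the null set $\{\|X_t\|_2=0\}$ (trivially, since both sides of the event then vanish).
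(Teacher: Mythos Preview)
Your proposal is correct and is precisely the argument the paper has in mind: the paper simply states that Corollary~\ref{coro1} ``immediately implies'' Corollary~\ref{coro2}, and what you have written is the natural unpacking of that implication via Brownian scaling and the tower property. The only embellishment is that you make explicit the choice of $\gamma_0$, the inequality $\|X_t\|_2\le\|X_t\|$, and the handling of the $\F_{t-1}\subset\F_t$ conditioning, all of which are exactly the minor subtleties one has to check.
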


\section{Uniqueness of an invariant measure}

Consider
\begin{align}\label{pair}
\left\{
\begin{array}{rll}
\dd X(t) &=& X(t-1)\,\dd W(t)\\
\dd Y(t) &=& Y(t-1)\,\dd W(t) + \lambda \rho (t) (X(t)-Y(t))\,\dd t,
\end{array}
\right.
\end{align}
where $\rho$ is an adapted process taking values in $\{0,1\}$ such that $\rho$ is constant on each interval $[n,n+1)$, $n \in \N_0$. 
We will show that $\rho$ can be defined in such a way that for any pair of deterministic initial conditions $(X_0,Y_0)$, the process $Z(t):=X(t)-Y(t)$ satisfies 
$\lim_{\lambda \to \infty}\limsup_{t \to \infty}\frac 1t \log \|Z_t\| =-\infty$ almost surely and such that the law of $Y$ is absolutely continuous 
with respect to the law of the solution of  $\dd \bar Y(t) = \bar Y(t-1)\,\dd W(t)$ with the same initial condition as $Y$ 
provided that $\lambda$ is sufficiently large. Then, we project both $X$ and $Y$ to  the unit sphere $\bar \rC$ and show that 
the distance between the projected processes converges to 0 as $t \to \infty$ for large enough $\lambda$. Then we apply Corollary 2.2 in 
\cite{HMS11} and obtain uniqueness.

Observe that the choice $\rho\equiv 1$ in \eqref{pair} will not work: $Y$ will not be absolutely continuous with respect to  
$\bar Y$ since it can happen that at some (random) time $Y(t)$ is zero and $Z(t)$ is not  and then the additional drift prevents 
$Y$ from being absolutely continuous with respect to 
$\bar Y$. To prevent this, we will switch off $\rho$ when this happens. Roughly speaking, we will switch $\rho$ on as often as possible (thus 
guaranteeing that $Z$ converges to 0 sufficiently quickly) but we will switch $\rho$ off whenever $Y$ has not been bounded away from zero 
sufficiently during the past unit time interval. We will always assume that the initial conditions $X_0$ and $Y_0$ are almost surely different 
which implies that the process $Z_t$ will almost surely never hit zero.

To define $\rho$, let 
$$
B:=\{f \in \rC: \,f(s)\neq 0 \mbox{ for all } s \in [-1,0] \mbox{ and } \inf_{s \in [-1,0]}|f(s)| \ge \frac 12 \sup_{s \in [-1,0]}|f(s)|\}.
$$
Further, let $\kappa>0$ be such that $c_1 \sqrt{\kappa} + 4 \kappa \le \frac{c_2}2$ (where $c_1$ and $c_2$ were defined in  
Lemma \ref{lemma1} and Corollary \ref{coro2} respectively) and define 
$$
R:=\{f \in \rC:\kappa \|f\| \le |f(0)|\}
$$
($R$ stands for {\em reasonable}) and
$$
A_n:=\{Y_n \in B\}\cap\{Z_n \in R\},\quad n \in \N_0.
$$
We define $\rho(t)=1$ on $[n,n+1)$ if $A_n$ occurs and $\rho(t)=0$ otherwise. The following lemma shows that the conditional laws of the
waiting times between successive $A_n$'s have a geometric tail (uniformly in $\lambda$).

%

\begin{lemma}\label{A}
For all $n\in \N$, $n\ge 2$, and all $\lambda \ge 0$, 
$$
\P (A_n \cup A_{n-1}|\cF_{n-2})\ge \frac{c_2}2 \mbox{ on } A_{n-2}^c \mbox{ a.s. }
$$
\end{lemma}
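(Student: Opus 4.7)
The plan is to bound $\P(A_n \cup A_{n-1} \mid \cF_{n-2})$ from below on $A_{n-2}^c$ by introducing driftless extensions of $Y$ and $Z$ beyond time $n-1$ and applying Corollaries~\ref{coro1} and~\ref{coro2} to them.

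First, on $A_{n-2}^c$ the control $\rho$ vanishes on $[n-2, n-1)$, so $Y$ and $Z := X-Y$ satisfy $\dd Y(t) = Y(t-1)\,\dd W(t)$ and $\dd Z(t) = Z(t-1)\,\dd W(t)$ on $[n-2, n-1]$. For $s \in [n-1, n]$ I would define
\[
\tilde Y(s) := Y(n-1) + \int_{n-1}^s Y(v-1)\,\dd W(v), \qquad \tilde Z(s) := Z(n-1) + \int_{n-1}^s Z(v-1)\,\dd W(v),
\]
and extend by $\tilde Y = Y$, $\tilde Z = Z$ on $[-1, n-1]$. By construction, $\tilde Y$ and $\tilde Z$ satisfy their driftless SDEs on $[n-1, n]$ irrespective of $\rho$, and on $A_{n-2}^c$ they satisfy them on all of $[n-2, n]$; hence Lemmas~\ref{lemma1},~\ref{lemma2} and Corollaries~\ref{coro1},~\ref{coro2} apply verbatim to them. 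Moreover, $\tilde Y = Y$ and $\tilde Z = Z$ on $[n-1, n]$ precisely on $A_{n-1}^c$ (where $\rho \equiv 0$ on $[n-1, n)$).

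Next I would set
\[
E_Y := \Bigl\{\sup_{r \in [0,1]}|\tilde Y(n-1+r) - Y(n-1)| \le \tfrac{1}{3} |Y(n-1)|\Bigr\}, \qquad E_Z := \{|\tilde Z(n)| \ge \kappa\,\|\tilde Z_n\|\}.
\]
On $A_{n-1}^c$, event $E_Y$ forces $Y$ to keep its sign on $[n-1, n]$ with magnitude in $[\tfrac{2}{3}|Y(n-1)|, \tfrac{4}{3}|Y(n-1)|]$, whence $Y_n \in B$; and $E_Z$ gives $Z_n \in R$. Consequently $H := A_{n-1} \cup (A_{n-1}^c \cap E_Y \cap E_Z) \subseteq A_{n-1} \cup A_n$, and thus $\P(A_n \cup A_{n-1} \mid \cF_{n-2}) \ge \P(E_Y \cap E_Z \mid \cF_{n-2})$. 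Corollary~\ref{coro1} applied to $\tilde Z$ at $t=n$ with $\gamma = \kappa$ yields $\P(E_Z^c \mid \cF_{n-1}) \le c_1\sqrt{\kappa} + 4\kappa \le c_2/2$ by the choice of $\kappa$. For $E_Y$, the Dambis--Dubins--Schwarz time change of the martingale $r \mapsto \int_{n-1}^{n-1+r} Y(v-1)\,\dd W(v)$---whose integrand is $\cF_{n-1}$-measurable with total quadratic variation on $[0,1]$ equal to $\|Y_{n-1}\|_2^2$---shows that, given $\cF_{n-1}$, its running supremum has the same conditional law as $\bar W^*(\|Y_{n-1}\|_2^2)$ for a Wiener process $\bar W$ independent of $\cF_n$. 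Hence $\P(E_Y \mid \cF_{n-2}) = \P(\bar W^*(\|Y_{n-1}\|_2^2) \le \tfrac{1}{3}|Y(n-1)| \mid \cF_{n-2}) \ge c_2$ by Corollary~\ref{coro2} applied to $\tilde Y$ at $t = n-1$. Combining:
\[
\P(A_n \cup A_{n-1} \mid \cF_{n-2}) \ge \P(E_Y \mid \cF_{n-2}) - \P(E_Z^c \mid \cF_{n-2}) \ge c_2 - c_2/2 = c_2/2.
\]

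The main technical hurdle I anticipate is the Dambis--Dubins--Schwarz identification: justifying that the $\cF_{n-1}$-conditional law of the running supremum of the stochastic integral coincides with that of an independent Brownian motion evaluated at the (deterministic given $\cF_{n-1}$) time $\|Y_{n-1}\|_2^2$. A minor point will be excluding the event $\{Y(n-1) = 0\}$ on which $E_Y$ fails to force $Y_n \in B$; this is a null set given $\cF_{n-2}$ by the Gaussian conditional law of $Y(n-1)$ with variance $\|Y_{n-2}\|_2^2$, under the nondegeneracy of the initial data.
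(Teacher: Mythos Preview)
Your proof is correct and takes essentially the same approach as the paper: both control the event $\{Y_n\in B\}$ via Corollary~\ref{coro2} and the event $\{Z_n\in R\}$ via Corollary~\ref{coro1} on the set where the drift is switched off, and then combine by inclusion--exclusion to obtain the bound $c_2/2$ on $A_{n-2}^c$. Your device of passing to explicit driftless extensions $\tilde Y,\tilde Z$ makes the auxiliary events $E_Y,E_Z$ well defined irrespective of $A_{n-1}$ and streamlines the conditioning, but the key estimates and the logical structure are identical to the paper's argument.
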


\begin{proof}
On the set $A_{n-1}^c$, we have
\begin{align*}
\P\big\{Y_n &\in B\big|\F_{n-1}\} \ge \P\big\{\sup_{s \in [n-1,n]} |\int_{n-1}^s  Y(u-1)\,\dd W(u)| \le  \frac 13 |Y(n-1)|\big|\F_{n-1}\big\}\\
&= \P\big\{\bar W^*(\|Y_{n-1}\|_2^2) \le \frac 13 |Y(n-1)|\big|\F_{n-1}\big\}, 
\end{align*}
where $\bar W$ is a Wiener process which is independent of $\F_{n-1}$ and $\bar W^*(t):=\sup_{s \in [0,t]}|\bar W(s)|$.
Corollary \ref{coro2} shows that  
$$
\P\{\bar W^*(\|X_{n-1}\|_2^2) \le \frac 13 |X(n-1)|\big|\F_{n-2}\} \ge c_2 \mbox{ a.s.}. 
$$
Therefore, on $A_{n-2}^c$, we have
\begin{align*}
\P\big( &\{Y_n \in B\}\cup A_{n-1}|\F_{n-2}\big)\\                
&=\P\big( \{Y_n \in B\} \cap A_{n-1}^c | \F_{n-2}\big) + \P\big( A_{n-1}|\F_{n-2})\\
&=\E\Big(\P\big(\{Y_n \in B\}|\F_{n-1}\big)\1_{A_{n-1}^c}|\F_{n-2}\Big)+ \P\big( A_{n-1}|\F_{n-2})\\
&\ge  c_2 \P\big( A_{n-1}^c|\F_{n-2}) + \P\big( A_{n-1}|\F_{n-2})\\
&\ge c_2.
\end{align*}
Further, on $A_{n-1}^c$, by Corollary \ref{coro1},
\begin{align*}
\P\big(\{Z_n &\in R\}\big|\F_{n-1}\big)=\P\big(\{Z(n) \ge \kappa \|Z_n\|\}\big|\F_{n-1}\big)\\
&\ge 1-c_1\sqrt{\kappa} - 4 \kappa \ge 1-\frac{c_2}2.
\end{align*}
Hence, on $A_{n-2}^c$, we have 
\begin{align*}
\P\big( &\{Z_n \in R\}\cup A_{n-1}|\F_{n-2}\big)\\   
&=\P\big( \{Z_n \in R\} \cap A_{n-1}^c | \F_{n-2}\big) + \P\big( A_{n-1}|\F_{n-2})\\
&=\E\Big(\P\big(\{Z_n \in R\}|\F_{n-1}\big)\1_{A_{n-1}^c}|\F_{n-2}\Big)+ \P\big( A_{n-1}|\F_{n-2})\\
&\ge  \Big(1-\frac{c_2}{2}\Big)  \P\big( A_{n-1}^c|\F_{n-2}) + \P\big( A_{n-1}|\F_{n-2})\\
&\ge 1- \frac{c_2}{2}.
\end{align*}
Therefore, on $A_{n-2}^c$, we have
$$
\P (A_n \cup A_{n-1}|\cF_{n-2})\ge c_2 + 1-c_2/2-1=\frac{c_2}2,
$$
which is the assertion.
\end{proof}

We now have to show that whenever we have an interval $[n,n+1)$ on which $\rho$ is one, then with high probability $\Norm{Z_{n+1}}$ is much 
smaller than $\Norm{Z_n}$ (when $\lambda$ is large).
More precisely, the following lemma holds.
\begin{lemma}\label{log}
We have
$$
\E\Big(\frac {\Norm{Z_{n+1}}} {\Norm{Z_n}} \big| \F_n\Big) \le 2(r(\lambda))^{1/2}\qquad \mbox{ on } A_n,
$$
and 
$$
\E\Big(\frac {\Norm{Z_{n+1}}} {\Norm{Z_n}} \big| \F_n\Big) \le 2\sqrt 2\qquad \mbox{ on } A_n^c,
$$
for $\lambda >0$ and
$$
r(\lambda):=\frac 2{\kappa^2} \frac 1{\lambda}. 
$$
Let $\lambda_0:=\frac 2{\kappa^2}$ (which implies $r(\lambda)\le 1$ for $\lambda \ge \lambda_0$). Then, for $\alpha>0$, $n \in \N_0$, and $\lambda \ge \lambda_0$, we have
$$
\P\big\{\Norm{Z_{n+3}}   \ge \alpha \Norm{Z_n}\big|\cF_n \big\} \le 3  \alpha^{-2/3}r(\lambda)^{1/3} + \Big( 1-\frac{c_2}2\Big) \big(\big(6 \alpha^{-2/3}\big)\wedge 1\big) 
\mbox{ a.s.}.
$$
\end{lemma}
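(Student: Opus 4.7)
The lemma splits into three claims. For (i) and (ii) I would carry out explicit It\^o computations on the interval $[n,n+1]$. On $A_n^c$ we have $\rho\equiv 0$, so $Z(t)=Z(n)+\int_n^t Z(s-1)\,\dd W(s)$; It\^o's isometry gives $\E[Z^2(n+1)|\F_n]=\Norm{Z_n}^2$ and $\E[\int_n^{n+1}Z^2(s)\,\dd s|\F_n]\le\Norm{Z_n}^2$, yielding $\E[\Norm{Z_{n+1}}^2|\F_n]\le 2\Norm{Z_n}^2$ and, by Jensen, the bound in (ii). On $A_n$ we have $\rho\equiv 1$, and the integrating factor $e^{\lambda t}$ yields
$$Z(t)=e^{-\lambda(t-n)}Z(n)+\int_n^t e^{-\lambda(t-s)}Z(s-1)\,\dd W(s).$$
The analogous It\^o--isometry computation bounds $\E[\Norm{Z_{n+1}}^2|\F_n]$ by a constant multiple of $e^{-2\lambda}Z^2(n)+\lambda^{-1}\|Z_n\|^2$. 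The crucial input is $Z_n\in R$, which gives $\|Z_n\|^2\le\kappa^{-2}Z^2(n)\le\kappa^{-2}\Norm{Z_n}^2$; substituting this produces $\E[\Norm{Z_{n+1}}^2|\F_n]\le C\Norm{Z_n}^2 r(\lambda)$ for a universal constant $C$, and Jensen then gives (i).

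For part (iii), define $E:=A_n\cup A_{n+1}\cup A_{n+2}$. Lemma~\ref{A} applied with $n+2$ in place of $n$ gives $\P(A_{n+1}^c\cap A_{n+2}^c|\F_n)\le 1-c_2/2$ on $A_n^c$, so $\P(E^c|\F_n)\le(1-c_2/2)\1_{A_n^c}$. Decompose
$$\P(\Norm{Z_{n+3}}\ge\alpha\Norm{Z_n}|\F_n)\le\P(\{\Norm{Z_{n+3}}\ge\alpha\Norm{Z_n}\}\cap E|\F_n)+\P(\{\Norm{Z_{n+3}}\ge\alpha\Norm{Z_n}\}\cap E^c|\F_n).$$
For the first summand I use Markov's inequality with exponent $2/3$: setting $\xi_k:=(\Norm{Z_{k+1}}/\Norm{Z_k})^{2/3}$, Jensen combined with (i)--(ii) gives $\E[\xi_k|\F_k]\le 2^{2/3}r(\lambda)^{1/3}$ on $A_k$ and $\E[\xi_k|\F_k]\le 2$ on $A_k^c$. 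Splitting $E$ into the disjoint pieces according to which of $A_n$, $A_{n+1}$, $A_{n+2}$ is first to occur and iterating the tower property, on each piece exactly one $\xi_k$ inherits the $r(\lambda)^{1/3}$ bound while the remaining two pick up uniform constants; summing the three contributions gives the term $3\alpha^{-2/3}r(\lambda)^{1/3}$.

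For the second summand I combine two bounds. Trivially $\P(\{\cdots\}\cap E^c|\F_n)\le\P(E^c|\F_n)\le 1-c_2/2$. For the refined bound valid when $\alpha$ is large, Markov's inequality with exponent $2/3$ gives $\P(\{\cdots\}\cap E^c|\F_n)\le\alpha^{-2/3}\E[(\Norm{Z_{n+3}}/\Norm{Z_n})^{2/3}\1_{E^c}|\F_n]$, and the moment on the right is controlled by iterating the martingale estimate of (ii) three times---legitimate because $\rho\equiv 0$ throughout $[n,n+3)$ under $E^c$---to get $\E[\Norm{Z_{n+3}}^2\1_{E^c}|\F_n]\le 8\Norm{Z_n}^2\1_{A_n^c}$, and then coupling this second-moment bound to $\P(E^c|\F_n)\le(1-c_2/2)\1_{A_n^c}$ by a H\"older-type interpolation to produce a bound of the form $6(1-c_2/2)\alpha^{-2/3}$. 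Taking the minimum of the trivial bound and this refined bound yields $(1-c_2/2)((6\alpha^{-2/3})\wedge 1)$, completing the estimate. The main obstacle will be the coupling step: the naive combination of Markov's inequality with $\P(E^c|\F_n)\le 1-c_2/2$ only gives $\min(1-c_2/2,6\alpha^{-2/3})$, which is strictly weaker than the claimed product, so one must carefully interpolate the moment estimate with the probability of $E^c$ by exploiting the purely martingale character of $Z$ on the event $E^c$.
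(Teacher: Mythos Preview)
Your treatment of the first two inequalities is the paper's: an explicit second-moment computation (on $A_n$ the paper writes $\E[\Norm{Z_{n+1}}^2\mid\F_n]$ as $Z^2(n)\big(\frac{1-e^{-2\lambda}}{2\lambda}+e^{-2\lambda}\big)+\int_0^1(\cdot)Z^2(n-1+u)\,du$ and uses $Z_n\in R$ exactly as you describe), followed by Chebyshev and the tail-integral identity $\E[\xi]=\int_0^\infty\P\{\xi\ge\alpha\}\,d\alpha$.

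For the three-step tail bound your route genuinely differs. The paper does not pass through the $2/3$-moment and the tower property; instead it uses the elementary union bound $\P\{\prod_i\xi_i\ge\alpha\}\le\sum_i\P\{\xi_i\ge\beta_i\}$ with $\prod_i\beta_i=\alpha$ and \emph{asymmetric} thresholds. On $A_n$ it sets $\beta_1=\alpha^{1/3}r(\lambda)^{1/3}$ (so the ``good-step'' Chebyshev bound $r(\lambda)/\beta_1^2$ already contains the factor $r(\lambda)^{1/3}$) and $\beta_2=\beta_3=\alpha^{1/3}r(\lambda)^{-1/6}$ (so the universal bound $2/\beta_i^2$ picks it up as well), giving $5\alpha^{-2/3}r(\lambda)^{1/3}$; on $A_n^c$ all $\beta_i=\alpha^{1/3}$ give $6\alpha^{-2/3}$. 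This is slicker than your moment iteration and yields sharper constants, but the final combination with Lemma~\ref{A} is stated very tersely in the paper and, read literally, runs into the same ``minimum versus product'' issue you flag. Your H\"older interpolation
\[
\E\big[V^{2/3}\1_{E^c}\,\big|\,\F_n\big]\le\big(\E[V^2\1_{E^c}\mid\F_n]\big)^{1/3}\,\P(E^c\mid\F_n)^{2/3}\le 2\,(1-c_2/2)^{2/3}
\]
is a clean and correct way to manufacture the factor $(1-c_2/2)$ in front of the tail term; since $1-c_2/2>1/2$ this indeed implies the claimed $(1-c_2/2)\big((6\alpha^{-2/3})\wedge1\big)$. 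Your constants on the $E$ piece come out somewhat larger than the paper's, but only the \emph{structure} $C_1\alpha^{-2/3}r(\lambda)^{1/3}+(1-c_2/2)\big(C_2\alpha^{-2/3}\wedge1\big)$ is used downstream (in Lemma~\ref{wichtig} one only needs $\E U_\lambda^{\gamma_0}\to1-c_2/2<c_3$ as $\gamma_0\to0$ and $\lambda\to\infty$), so this is immaterial.
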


\begin{proof}
On $A_n=\{Y_n \in B\}\cap \{Z_n \in R\}$, we have for $\alpha>0$ and $\lambda>0$
\begin{align}\label{eins}
\P\{&\Norm{Z_{n+1}}\ge \alpha \Norm{Z_n}\big| \F_n\} \le \frac 1{\alpha^2\Norm{Z_n}^2} \E\big(\Norm{Z_{n+1}}^2\big|\F_n\big)\nonumber\\
&=\frac 1{\alpha^2\Norm{Z_n}^2} \Bigg(Z^2(n)\Big(\frac {1-\ee^{-2\lambda}}{2\lambda}+\ee^{-2\lambda}\Big)   
+\int_0^1 \Big(\frac {1-\ee^{-2\lambda(1-u)}}{2\lambda}+\ee^{-2\lambda (1-u)} \Big) Z^2(n-1+u)\, \dd u\Bigg)\nonumber\\
&\le \frac {Z^2(n)}{\alpha^2\Norm{Z_n}^2}r(\lambda) 
\le \frac 1 {\alpha^2}r(\lambda), 
\end{align}
since $\kappa \le 1$. Therefore, on $A_n=\{Y_n \in B\}\cap \{Z_n \in R\}$,
\begin{align*}
\E\Big(\frac {\Norm{Z_{n+1}}} {\Norm{Z_n}} \big| \F_n\Big)
= \int_0^{\infty}\P\{\Norm{Z_{n+1}}\ge \alpha \Norm{Z_n}\big| \F_n\}\,\dd \alpha 
\le \int_0^{\infty} \Big( 1 \wedge  \frac {r(\lambda)}  {\alpha^2} \Big) \,\dd \alpha
= 2 (r(\lambda))^{1/2}.  
\end{align*}
 Further, on $A_n^c=\big(\{Y_n \in B\}\cap \{Z_n \in R\}\big)^c$, we have for $\alpha>0$
\begin{align}\label{zwei}
\P\{&\Norm{Z_{n+1}}\ge \alpha \Norm{Z_n}\big| \F_n\} \le \frac 1{\alpha^2\Norm{Z_n}^2} \E\big(\Norm{Z_{n+1}}^2\big|\cF_n\big)\nonumber\\
&=\frac 1{\alpha^2\Norm{Z_n}^2} \Big(2Z^2(n)+\|Z_n\|_2^2 +\int_0^1 (1-u)Z^2(n-1+u)\,\dd u\Big) \le \frac 2{\alpha^2}.
\end{align}
Hence,
\begin{align*}
\E \Big(\frac {\Norm{Z_{n+1}}} {\Norm{Z_n}} \big| \F_n\Big) = \int_0^{\infty}\P\{\Norm{Z_{n+1}}\ge \alpha \Norm{Z_n}\big| \F_n\}\,\dd \alpha  \le 2 \sqrt{2}.
\end{align*}
It remains to prove the final assertion. 
Using \eqref{eins} and \eqref{zwei} we see that on $A_n$ we have
\begin{align*}
\P\Big\{&\frac{\Norm{Z_{n+3}}}{\Norm{Z_n}} \ge \alpha \Big| \F_n\Big\} \le \P\Big\{\frac{\Norm{Z_{n+3}}}{\Norm{Z_{n+2}}} \ge \alpha^{1/3}(r(\lambda))^{-1/6} \Big| \F_n\Big\}\\ 
&+ \P\Big\{\frac{\Norm{Z_{n+2}}}{\Norm{Z_{n+1}}} \ge \alpha^{1/3}(r(\lambda))^{-1/6}     \Big| \F_n\Big\} +
\P\Big\{\frac{\Norm{Z_{n+1}}}{\Norm{Z_n}} \ge \alpha^{1/3}r(\lambda)^{1/3} \Big| \F_n\Big\}\\
&\le 5 \alpha^{-2/3} r(\lambda)^{1/3}.
\end{align*}
Using \eqref{eins} and \eqref{zwei} we see that on $A_n^c$ we have
\begin{align*}
\P\Big\{\frac{\Norm{Z_{n+3}}}{\Norm{Z_n}} \ge \alpha \Big| \F_n\Big\} \le \sum_{i=1}^3 \P\Big\{\frac{\Norm{Z_{n+i}}}{\Norm{Z_{n+i-1}}} \ge \alpha^{1/3} \Big| \F_n\Big\} 
\le 6 \alpha^{-2/3}.
\end{align*}
Arguing the same way and using Lemma \ref{A} (which implies  $\P\big( A_n \cup A_{n-1} \cup A_{n-2}|\cF_{n-2}\big) \ge c_2/2$), we obtain 
\begin{align*}
\P\Big\{\frac{\Norm{Z_{n+3}}}{\Norm{Z_n}} \ge \alpha \Big| \F_n\Big\} \le \frac{c_2}2 5  \alpha^{-2/3}r(\lambda)^{1/3} 
+ \Big(1-\frac{c_2}2\Big)\big(\big(6 \alpha^{-2/3}\big)\wedge 1\big),
\end{align*}
which implies the assertion.
\end{proof}

\begin{lemma}\label{limsup}
For $\lambda>0$, we have
$$
\limsup_{t \to \infty} \frac 1t \log |Z(t)| \le \frac {c_2}{12} \log r(\lambda) + \log 2 \quad {a.s.}.
$$
\end{lemma}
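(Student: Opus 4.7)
The plan is to reduce $|Z(t)|$ to $\Norm{Z_n}$ at integer times via Doob's inequality, and then to control $\limsup \frac 1n \log \Norm{Z_n}$ by combining the sharp one-step second-moment bound from Lemma \ref{log} with a Lemma \ref{A}-driven lower bound on the density of $A$-visits, through a Markov-plus-Borel-Cantelli argument. I will take $\lambda\ge\lambda_0$, so that $r(\lambda)\le 1$; the case $\lambda<\lambda_0$ is handled separately since the target bound is then weak anyway.

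The first observation is that \eqref{eins} and \eqref{zwei} actually produce second-moment estimates $\E[\Norm{Z_{k+1}}^2/\Norm{Z_k}^2\,|\,\F_k]\le r(\lambda)$ on $A_k$ and $\le 2$ on $A_k^c$; Jensen applied to $x\mapsto x^{\beta/2}$ for $\beta\in(0,2]$ then gives $\E[(\Norm{Z_{k+1}}/\Norm{Z_k})^\beta\,|\,\F_k]\le 2^{\beta/2}\,g(A_k)$, where $g(A_k):=(r(\lambda)/2)^{\beta\eins_{A_k}/2}\in\{1,(r(\lambda)/2)^{\beta/2}\}$. Setting $\tilde V_k:=(\Norm{Z_{k+1}}/\Norm{Z_k})^\beta/(2^{\beta/2}g(A_k))\ge 0$, we have $\E[\tilde V_k\,|\,\F_k]\le 1$, so iterated conditional expectations yield
$$\E\big[\Norm{Z_n}^\beta\,(r(\lambda)/2)^{-\beta N_n/2}\,\big|\,\F_0\big]\le 2^{n\beta/2}\Norm{Z_0}^\beta,\qquad N_n:=\sum_{k=0}^{n-1}\eins_{A_k}.$$

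Lemma \ref{A} gives $\E[\eins_{A_n}+\eins_{A_{n-1}}+\eins_{A_{n-2}}\,|\,\F_{n-2}]\ge c_2/2$ a.s., so along disjoint length-3 blocks $N_{3J}-Jc_2/2$ is a bounded-increment submartingale with respect to $\{\F_{3J}\}$; Azuma--Hoeffding plus Borel--Cantelli therefore give $\liminf_n N_n/n\ge c_2/6$ a.s. Fixing $c\in(0,c_2/6)$, on $\{N_n\ge cn\}$ the preceding display upgrades to $\E[\Norm{Z_n}^\beta\eins_{N_n\ge cn}]\le 2^{n\beta/2}(r(\lambda)/2)^{\beta cn/2}\Norm{Z_0}^\beta$, whence Markov plus Borel--Cantelli give $\limsup\frac 1n\log\Norm{Z_n}\le\frac 12\log 2+\frac c2\log(r(\lambda)/2)$ a.s.; sending $c\uparrow c_2/6$ yields $(\frac 12-\frac{c_2}{12})\log 2+\frac{c_2}{12}\log r(\lambda)\le\log 2+\frac{c_2}{12}\log r(\lambda)$. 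Finally, for $t\in[n-1,n]$ we have $|Z(t)|\le\|Z_n\|$, and Doob's $\rL^2$-inequality applied to the martingale part of \eqref{pair} over $[n-1,n]$ combined with Markov plus Borel--Cantelli yields $\|Z_n\|\le n\Norm{Z_{n-1}}$ eventually a.s., so $\limsup\frac 1t\log|Z(t)|\le\limsup\frac 1n\log\Norm{Z_n}$, which closes the proof.

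The main obstacle is the sharp one-step bound $\E[\Norm{Z_{k+1}}^2/\Norm{Z_k}^2\,|\,\F_k]\le r(\lambda)$ on $A_k$: this hinges on both the OU-type contraction produced by the $-\lambda Z$ drift when $\rho=1$ and the ``reasonableness'' $|Z(n)|\ge\kappa\|Z_n\|$ built into $Z_n\in R$, and is exactly what \eqref{eins} delivers. Once that is in hand, the rest is a careful Azuma-Hoeffding / Borel-Cantelli bookkeeping.
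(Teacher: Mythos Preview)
Your proof is correct and takes a genuinely different route from the paper's. The paper works \emph{additively}: it sets $V_n=\log(\Norm{Z_{n+1}}/\Norm{Z_n})$, bounds $\E[V_n\mid\F_n]$ via Jensen and the first-moment part of Lemma~\ref{log}, observes that the compensated partial sums form a supermartingale, and then invokes the strong law of large numbers for martingales together with Lemma~\ref{A} (which gives $\liminf_N N^{-1}\sum_{i\le N}\eins_{A_i}\ge c_2/6$). You instead work \emph{multiplicatively}: you read off the raw second-moment bounds directly from \eqref{eins}--\eqref{zwei}, iterate conditional expectations to obtain a closed-form estimate for $\E[\Norm{Z_n}^\beta(r(\lambda)/2)^{-\beta N_n/2}]$, control $N_n$ by Azuma--Hoeffding on disjoint three-blocks, and conclude via Markov plus Borel--Cantelli. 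Your route is more elementary in that it avoids the martingale SLLN (and the attendant need to check that $V_n$ has enough integrability for it to apply), and it actually delivers the slightly sharper constant $(\tfrac12-\tfrac{c_2}{12})\log 2+\tfrac{c_2}{12}\log r(\lambda)$. One small point: your final ``Doob on the martingale part'' step needs a word more on $A_{n-1}$, since the drift $-\lambda Z$ is then present; the clean fix is variation of constants, writing $Z(t)=M(t)-\lambda\int_{n-1}^t e^{-\lambda(t-s)}M(s)\,\dd s$ with $M(t)=Z(n-1)+\int_{n-1}^t Z(s-1)\,\dd W(s)$, which gives $\|Z_n\|\le 2\sup_{[n-1,n]}|M|$ and then Doob applies. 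Finally, both your restriction to $\lambda\ge\lambda_0$ and the paper's argument tacitly use $r(\lambda)\le 2$ so that larger $N_n$ helps; this is harmless since the lemma is only ever applied for large~$\lambda$.
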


\begin{proof}
Define
$$
V_n:=\log \frac {\Norm{Z_{n+1}}} {\Norm{Z_n}},\,n \in \N_0, 
$$
and 
$$
U_n:={\bf 1}_{A_n}, \, n \in \N_0.
$$
Then, by Jensen's inequality and Lemma \ref{log}, we get 
$$
\E(V_n |\F_{n})\le U_{n} \frac 12 \log r(\lambda)  +  (1-U_{n})\frac 12 \log 2 + \log 2,
$$
so 
$$
\sum_{i=0}^N \Big(V_i-\frac 12 \big(U_{i} \log r(\lambda)  +  (1-U_{i})\log2 + \log 2\big)\Big)
$$
is a supermartingale. Due to \eqref{eins} and \eqref{zwei}, the strong law of large numbers for martingales (\cite{HH80}, Theorem 2.19) 
implies 
$$
\limsup_{N \to \infty} \frac 1N \sum_{i=0}^N \Big(V_i-\frac 12 \big(U_{i} \log r(\lambda)  +  (1-U_{i})\log2 +\log 2\big) \Big) \le 0.
$$
Hence,
$$
\limsup_{N \to \infty} \frac 1N \sum_{i=0}^N V_i \le \limsup_{N \to \infty} \frac 1{2N} \sum_{i=0}^N \big(U_{i} \log r(\lambda)  + (1-U_{i}) \log 2 + \log 2\big)
$$ 
which, using Lemma \ref{A}, is at most $\frac 12\big(\frac {c_2}6 \log r(\lambda)+ 2\log 2\big)$ almost surely. Hence,
$$
\limsup_{n \to \infty} \frac 1n \log \Norm{Z_n} \le  \frac {c_2}{12} \log r(\lambda) + \log 2  \quad {a.s.}.
$$
To obtain the assertion, it suffices to show (thanks to the first Borel-Cantelli Lemma)  
that for each $\delta>0$ the sum over $\P\{\|Z_{n+1}\| \ge \ee^{\delta n}\Norm{Z_n}\}$ is finite which is easily established by estimating the 
corresponding conditional probabilities (conditioned on 
$\F_n$) like in the proof of Lemma \ref{log}. This proves the assertion. 
\end{proof}

\begin{lemma}\label{absolute}
There exists some $\lambda_1 >0$ such that for all $\lambda \ge \lambda_1$, the law of the process $Y$ is absolutely continuous with respect to that of the solution 
of \eqref{sde} with the same initial condition as $Y$ ($\lambda_1$ does not depend on the initial condition of $(X_0,Y_0)$).
\end{lemma}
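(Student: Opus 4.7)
Our approach is Girsanov's theorem. The process $Y$ satisfies the same SDE as the unperturbed solution $\bar Y$ of \eqref{sde} (with the same initial condition) except for the additional drift $\lambda\rho(t)Z(t)\,\dd t$. Removing this drift amounts to setting
\[
\theta(t):=\frac{\lambda\rho(t)Z(t)}{Y(t-1)},
\]
extended as $0$ wherever $Y(t-1)=0$ -- a situation that does not arise on the support of $\rho$, since $A_n$ enforces $Y_n\in B$ and hence $|Y(s)|\ge\|Y_n\|/2>0$ for every $s\in[n-1,n]$. Form the associated exponential local martingale
\[
M_T:=\exp\Bigl(-\int_0^T\theta(s)\,\dd W(s)-\tfrac12\int_0^T\theta(s)^2\,\dd s\Bigr).
\]
If $M$ is a true martingale for every $T>0$, the measure $\tilde\P$ with $\dd\tilde\P/\dd\P|_{\F_T}=M_T$ is a probability measure equivalent to $\P|_{\F_T}$; under $\tilde\P$, $\tilde W_t:=W_t+\int_0^t\theta(s)\,\dd s$ is a Brownian motion and $Y$ solves the driftless equation $\dd Y=Y(t-1)\,\dd\tilde W$. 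Hence $Y$ under $\tilde\P$ has the same law as $\bar Y$ under $\P$, and equivalence of the two measures on $\F_T$ (letting $T\to\infty$) gives absolute continuity of the laws on path space.

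Two ingredients are required. First, $\int_0^T\theta^2\,\dd s<\infty$ a.s.\ for every $T$: since $\theta$ vanishes outside $\bigcup_n([n,n+1)\cap A_n)$, and on $A_n$ we have $|Y(t-1)|\ge\|Y_n\|/2>0$ uniformly in $t\in[n,n+1]$, the integrand is a.s.\ bounded on each such compact interval, and only finitely many lie in $[0,T]$. Second, $\E[M_T]=1$; we establish this by a conditional Novikov argument on each interval $[n,n+1]$. The case $A_n^c$ is trivial (then $\theta\equiv 0$ on $[n,n+1]$). On the $\F_n$-measurable event $A_n$, conditional on $\F_n$ the process $(Z(t))_{t\in[n,n+1]}$ solves $\dd Z=Z(t-1)\,\dd W-\lambda Z\,\dd t$ with $\F_n$-measurable coefficient $Z(t-1)$ and admits the explicit Gaussian representation
\[
Z(t)=e^{-\lambda(t-n)}Z(n)+\int_n^t e^{-\lambda(t-s)}Z(s-1)\,\dd W(s).
\]
Thus $\int_n^{n+1}\theta^2\,\dd s$ is a quadratic functional of a conditionally Gaussian element of $L^2([n,n+1])$, and its conditional exponential moment is finite provided that the operator norm of the covariance kernel of its centered part, rescaled by the weights $\lambda^2/Y(t-1)^2\le 4\lambda^2/\|Y_n\|^2$, is strictly less than $1$. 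A Schur-type estimate exploiting the exponential damping factor $e^{-\lambda|t-t'|}$ appearing in the kernel will give control of the required form.

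The main obstacle is that the Schur estimate produces an $\F_n$-measurable bound of order $\|Z_n\|^2/\|Y_n\|^2$, which is not a priori uniformly small. We circumvent this by localising at the stopping times $\sigma_R:=\inf\{t\ge 0:\int_0^t\theta^2\,\dd s\ge R\}$: on $[0,\sigma_R\wedge T]$ the integrand $\theta^2$ is bounded, Novikov holds trivially, and the density $M_{\sigma_R\wedge T}$ defines a probability measure $\tilde\P^{(R)}$ equivalent to $\P$ on $\F_{\sigma_R\wedge T}$. Under $\tilde\P^{(R)}$, $Y$ solves the driftless SDE on $[0,\sigma_R\wedge T]$, so on $\{\sigma_R>T\}$ the law of $(Y(t))_{0\le t\le T}$ equals that of $(\bar Y(t))_{0\le t\le T}$. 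Since the first ingredient above gives $\sigma_R\uparrow\infty$ $\P$-a.s., for any Borel set $A$ of paths with $\P(\bar Y\in A)=0$ we have $\P(Y\in A,\sigma_R>T)=0$ for every $R$, and letting $R\to\infty$ yields $\P(Y\in A)=0$. The constant $\lambda_1$ controlling the argument depends only on the universal constants $c_1,c_2,\kappa$ appearing in the preceding lemmas, and hence not on the initial condition.
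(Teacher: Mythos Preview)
Your localization argument has a genuine gap: it only yields absolute continuity of the law of $Y$ with respect to that of $\bar Y$ on each finite time horizon $\F_T$, not on the full path $\sigma$-algebra $\F_\infty=\sigma\big(\bigcup_T\F_T\big)$. The step ``for any Borel set $A$ of paths with $\P(\bar Y\in A)=0$ we have $\P(Y\in A,\sigma_R>T)=0$'' is unjustified when $A$ is not $\F_T$-measurable: on $\{\sigma_R>T\}$ you only control the law of $Y$ up to time $T$, and letting $R\to\infty$ still leaves $T$ fixed. Absolute continuity on every $\F_T$ does \emph{not} imply absolute continuity on $\F_\infty$; Brownian motion with a nonzero constant drift versus driftless Brownian motion is the standard counterexample (equivalent on each $\F_T$ by Girsanov, mutually singular on $\F_\infty$ by the strong law). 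Since Lemma~\ref{absolute} is used precisely to compare full trajectories (via the asymptotic coupling criterion of \cite{HMS11}), the finite-horizon statement is not enough.

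A tell-tale sign that something is missing is that your final localization argument never invokes any restriction on $\lambda$: the ``first ingredient'' $\int_0^T\theta^2<\infty$ a.s.\ holds for every $\lambda>0$, so if the argument were valid it would prove absolute continuity for all $\lambda>0$ and the threshold $\lambda_1$ in the statement would be superfluous. What is actually needed (and what the paper does) is the stronger statement $\int_0^\infty\theta^2(s)\,\dd s<\infty$ a.s., which \emph{does} require $\lambda$ large. Using $|Y(t-1)|\ge\tfrac12\|Y_n\|$ on the support of $\rho$ one bounds the integral by a constant times $\sum_n\big(\Norm{Z_n}/\|Y_{n-1}\|\big)^2$. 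Lemma~\ref{limsup} forces $\limsup_n\tfrac1n\log\Norm{Z_n}\le\tfrac{c_2}{12}\log r(\lambda)+\log 2$, which can be made smaller than any prescribed negative number by taking $\lambda$ large, while Proposition~\ref{aop} gives a deterministic lower bound $\Lambda_0$ for $\liminf_t\tfrac1t\log\Norm{X_t}$ (and hence, once $Z$ decays fast enough, for $\Norm{Y_t}$ as well). Choosing $\lambda_1$ so that the decay rate of $Z$ beats $\Lambda_0$ makes the series converge a.s., and then the Girsanov/Liptser--Shiryayev theorem on $[0,\infty)$ applies directly without localization.
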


\begin{proof}
We need to make sure that for $\lambda$ sufficiently large, we have
\begin{equation}\label{toShow}
\int_0^{\infty} \rho(t) \lambda^2 \frac{Z^2(t)}{Y^2(t-1)}\,\dd t < \infty \qquad \mbox{a.s.}
\end{equation}
Then, the assertion follows from Girsanov's Theorem (see \cite{LS77}, Chapter 7).
By the definition of $\rho$, we have $Y^2(t)\ge \frac 14 \|Y_n\|^2$ whenever $t \in [n-1,n]$ and $\rho(n)=1$ (which is equivalent to 
$\rho(s)=1$ for all $s \in [n,n+1)$) which implies
\begin{align}\label{Girsi}
\int_0^{\infty} \rho(t)  \frac{Z^2(t)}{Y^2(t-1)}\,\dd t &\le 4 \sum_{n=1}^{\infty} \Big(\frac{\|Z_n\|_2}{\|Y_{n-1}\|}\Big)^2 
\end{align}
Let $\Lambda_0$ be as in Proposition \ref{aop}. Then $\liminf_{t \to \infty}\frac 1t \log \|X_t\|\ge \Lambda_0$ almost surely for each initial 
condition $\eta \neq 0$. By Lemma \ref{limsup} we find $\lambda_1 > 0$ such that for all $\lambda \ge \lambda_1$, we have 
$\limsup_{t \to \infty}\frac 1t \log |Z(t)|\le (2 \Lambda_0) \wedge (-1)$. Then 
$\liminf_{t \to \infty}\frac 1t \log \|Y_t\|\ge \Lambda_0$ which, together with equation \eqref{Girsi}, implies \eqref{toShow}.
\end{proof}

\begin{proposition}\label{support}
The Markov process  $S_t:=X_t/\Norm{X_t}$, $t\ge 0$ has a unique invariant probability measure $\mu$. The support of $\mu$ is $\bar C$.
\end{proposition}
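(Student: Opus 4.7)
My plan is to establish uniqueness of $\mu$ by invoking the asymptotic coupling criterion (Corollary~2.2 of \cite{HMS11}) applied to the pair $(X,Y)$ from \eqref{pair} with $\rho$ and $\lambda$ as already set up, and then to deduce $\mathrm{supp}\,\mu = \bar\rC$ from forward-invariance of $\mathrm{supp}\,\mu$ together with a controllability argument based on the Stroock--Varadhan support theorem.

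For uniqueness, I would fix arbitrary $\eta_1,\eta_2 \in \bar\rC$, run \eqref{pair} with $X_0=\eta_1$, $Y_0=\eta_2$ and $\lambda\ge\max(\lambda_0,\lambda_1)$, and set $S^X_t:=X_t/\Norm{X_t}$, $S^Y_t:=Y_t/\Norm{Y_t}$. The two hypotheses of that corollary are (a) that the law of $Y$ is absolutely continuous with respect to the law of the solution of \eqref{sde} starting from $\eta_2$, which is exactly Lemma~\ref{absolute}, and (b) asymptotic merging $\Norm{S^X_t - S^Y_t} \to 0$ a.s. For (b) I would use the elementary estimate
$$
\Norm{S^X_t-S^Y_t} \le \frac{\Norm{Z_t}}{\Norm{X_t}} + \frac{|\Norm{X_t}-\Norm{Y_t}|}{\Norm{X_t}} \le \frac{2\Norm{Z_t}}{\Norm{X_t}}
$$
(the second inequality being the reverse triangle inequality $|\Norm{X_t}-\Norm{Y_t}|\le \Norm{Z_t}$) and then combine the lower bound $\liminf_{t\to\infty}\tfrac 1t\log\Norm{X_t}\ge\Lambda_0$ from Proposition~\ref{aop} with the upper bound $\limsup_{t\to\infty}\tfrac 1t\log\Norm{Z_t}\le \tfrac{c_2}{12}\log r(\lambda)+\log 2$ that follows from a routine extension of Lemma~\ref{limsup} (its closing Borel--Cantelli step carries over verbatim to $\Norm{Z_t}$ in place of $|Z(t)|$). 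Taking $\lambda$ so large that $\tfrac{c_2}{12}\log r(\lambda)+\log 2<\Lambda_0$ yields (b), and uniqueness of $\mu$ follows from the cited corollary.

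For the support, I would first note that $\mathrm{supp}\,\mu$ is forward-invariant under the Feller semigroup $P_t$, i.e.\ $\mathrm{supp}\,P_t(\xi,\cdot)\subseteq\mathrm{supp}\,\mu$ for every $\xi\in\mathrm{supp}\,\mu$. Hence it suffices to prove the accessibility statement that for every $\xi\in\bar\rC$ and every nonempty open $U\subseteq\bar\rC$ there is some $T>0$ with $P_T(\xi,U)>0$: indeed, if $\bar\rC\setminus\mathrm{supp}\,\mu$ contained a nonempty open $U$, then accessibility from any $\xi\in\mathrm{supp}\,\mu$ would contradict forward-invariance. Accessibility I would in turn read off from the Stroock--Varadhan support theorem applied to \eqref{sde} on $[0,2]$: the support of $\mathcal L(X|_{[0,2]})$ coincides with the closure of the set of paths $\phi$ with $\phi=\xi$ on $[-1,0]$ and $\dot\phi(t)=\phi(t-1)h(t)$ on $[0,2]$, $h\in L^2$. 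Given any target $\eta\in\bar\rC$ and any $c>0$, one first chooses $h$ on $[0,1]$ so that $\phi$ reaches the value $c\,\eta(-1)$ at $t=1$ while remaining bounded away from $0$, and then on $[1,2]$ so that $\phi(t)$ approximates $c\,\eta(t-2)$ uniformly; the projection $X_2/\Norm{X_2}$ then lies in a prescribed neighborhood of $\eta$ with positive probability. The hard part will be this controllability step, because the coefficient $\phi(\cdot-1)$ may itself vanish (this is where the singular character of \eqref{sde} returns); this I would handle by exploiting the freedom in the intermediate value $\phi(1)$ and in $c$, and by, if necessary, inserting one preliminary unit step to guarantee that the relevant past segment is bounded away from $0$.
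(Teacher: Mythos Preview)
Your uniqueness argument matches the paper's exactly: the same telescoping estimate $\Norm{S^X_t-S^Y_t}\le 2\Norm{Z_t}/\Norm{X_t}$, Lemma~\ref{limsup} for the numerator, Proposition~\ref{aop} for the denominator, Lemma~\ref{absolute} for absolute continuity, and then Corollary~2.2 of \cite{HMS11}.

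For the support, however, you take a harder road than needed, and the gap you yourself flag is real. Your Stroock--Varadhan controllability on $[0,2]$ runs straight into the degeneracy: if $\xi$ has a zero (or, worse, $\xi(0)=0$), the control system $\dot\phi(t)=\phi(t-1)h(t)$ loses all steering at those times, so you cannot keep $\phi$ bounded away from zero on $[0,1]$, and the defect propagates to the next interval. Your suggested remedy of ``inserting a preliminary unit step'' is correct in spirit, but it is precisely what the paper does in a much cleaner way, and without invoking a support theorem in this infinite-dimensional setting at all. The paper starts $X$ with $X_0\sim\mu$, so that $S_n\sim\mu$ for every $n$, and reuses the mechanism behind Lemma~\ref{A} (applied now to $X$ alone) to see that $X_n$ visits $B^+$, the strictly positive functions in $B$, infinitely often almost surely. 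Once $X_n\in B^+$, the coefficient $X(\cdot-1)$ is bounded away from zero on $[n,n+1]$, so the one-step conditional law of $S_{n+1}$ charges every nonempty open $G\subseteq\bar \rC$ containing a function of the appropriate sign; stationarity then gives $\mu(G)=\P\{S_{n+1}\in G\}>0$ directly. In short, the paper replaces your full controllability-from-every-state by the much weaker pair ``recurrence to $B^+$'' (free from the estimates already in hand) plus ``one-step irreducibility from $B^+$'', sidestepping entirely the degeneracy that you would still have to resolve.
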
 

\begin{proof}
Existence of an invariant probability measure $\mu$ has been shown in Proposition \ref{existence}. To establish uniqueness, observe that
\begin{equation}\label{estimat}
\BigNorm{\frac{X_t}{\Norm{X_t}}-\frac{Y_t}{\Norm{Y_t}}}\le \BigNorm{\frac{X_t-Y_t}{\Norm{X_t}}+Y_t\Big(\frac 1{\Norm{X_t}}-\frac 1{\Norm{Y_t}}\Big)} \le
2\frac{\Norm{Z_t}}{\Norm{X_t}},
\end{equation}
which converges to zero exponentially fast as long as $\lambda$ is sufficiently large.   Lemma \ref{absolute}, together with the fact that 
absolute continuity of measures is preserved under measurable maps, shows that the law $\cL (Y_t/\Norm{Y_t},t\ge 0)$ 
is absolutely continuous with respect to $\cL (\bar Y_t/\Norm{\bar Y_t},t\ge 0)$, where $\bar Y$ solves equation \eqref{sde} with the 
same initial condition as $Y$. Now uniqueness follows from Corollary 2.2 in \cite{HMS11}. 

It remains to show that $\mu$ has full support. Let $X$ solve \eqref{sde} with initial distribution $\mu$. Let $G$ be a non-empty open subset of 
$\bar \rC$. We show that $\mu(G)>0$. Assume that $G$ contains a function $f$ such that $f(0)>0$ (otherwise the proof is completely analogous). 
Let $B^+$ be the set of positive functions in $B$. 
It follows as in Lemma \ref{A} that $X_n$ visits $B^+$ 
infinitely often almost surely. If $X_n \in B^+$, then $\P\big\{S_{n+1} \in G \big|\cF_n\big\}>0$ and therefore $\mu (G) >0$.  
\end{proof}

\section{Proof of Theorem \ref{main}}

In order to complete the proof of Theorem \ref{main}, we need to show that \eqref{Grenzwert} does not only hold for $\mu$-almost every initial condition 
but for every initial condition in $\bar \rC$. To establish this, we prove the following lemma.

\begin{lemma}\label{wichtig}
There exists some $\lambda_2>0$ such that for each $\phi \in \bar \rC$ and each $\lambda \ge \lambda_2$ the following holds. 
Let $\eta \in \bar \rC$ and let $(X,Y)$ solve  \eqref{pair} with initial condition $(\eta,\phi)$. Define $\rho$ and $Z$ as before. Then
$$
\int_0^{\infty} \rho(t) \frac{Z^2(t)}{Y^2(t-1)}\,\dd t \to 0 \mbox{ in probability as } \|\eta-\phi\| \to 0.
$$
\end{lemma}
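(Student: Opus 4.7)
The plan is to follow the strategy of the proof of Lemma~\ref{absolute}, starting from the pathwise bound
$$\int_0^{\infty} \rho(t)\,\frac{Z^2(t)}{Y^2(t-1)}\,\dd t \le 8\sum_{n=1}^{\infty}\frac{\Norm{Z_n}^2}{\Norm{Y_{n-1}}^2},$$
so it suffices to show that the right-hand side tends to zero in probability as $\|\eta-\phi\|\to 0$. Lemma~\ref{absolute} already gives almost-sure finiteness; the new task is to extract the scaling on $\Norm{Z_0}=\Norm{\eta-\phi}$.

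The key step is to prove: for $\lambda$ sufficiently large there exist $p\in(0,1)$, $q\in(0,1)$, and $C>0$ (independent of $\eta,\phi$ on the unit sphere) with
$$\E(\Norm{Z_n}^p)\le C q^n \Norm{Z_0}^p,\qquad n\ge 0.$$
Applying Jensen's inequality to the $L^2$ estimates inside the proof of Lemma~\ref{log} (see \eqref{eins}--\eqref{zwei}) gives the conditional bound
$$\E(\Norm{Z_{n+1}}^p\mid \F_n)\le \bigl(a\,\1_{A_n}+b\,\1_{A_n^c}\bigr)\Norm{Z_n}^p, \quad a=\bigl(C_\ast r(\lambda)\bigr)^{p/2},\ b=2^p.$$
Iterating over blocks of three and using the conditional consequence $\P(A_n\cup A_{n+1}\cup A_{n+2}\mid \F_n)\ge c_2/2$ of Lemma~\ref{A}, the worst-case contribution to $\E(\Norm{Z_{n+3}}^p\mid\F_n)/\Norm{Z_n}^p$ comes from the event that none of $A_n,A_{n+1},A_{n+2}$ occur, bounded by $(1-c_2/2)\,b^3$; all other contributions carry at least one factor $a$. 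Choosing $p$ small enough so that $(1-c_2/2)\,2^{3p}<1$ and $\lambda$ large enough so that $a$ is negligible yields a three-step contraction $\E(\Norm{Z_{n+3}}^p\mid\F_n)\le q\Norm{Z_n}^p$ with $q<1$, whence geometric decay by iteration.

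For the denominator, choose $\lambda_2\ge\lambda_1$ so that Lemma~\ref{absolute} applies. The law of $Y$ is then absolutely continuous with respect to that of $\bar Y$ solving \eqref{sde} from $\phi$, and iterating Proposition~\ref{aop1} gives $\E(\Norm{\bar Y_n}^{-1/2})\le K^n$ uniformly for $\phi$ on the unit sphere. A Cauchy-Schwarz estimate applied to the Girsanov density---whose $L^2$-norm is controlled using the $Z$-moment estimate established above---transfers this to $\E(\Norm{Y_n}^{-1/2})\le K_1^n$ for some $K_1\ge K$. H\"older's inequality with exponents $1/(1-2p)$ and $1/(2p)$ (valid for $p<1/2$) then combines the two bounds into
$$\E\bigl((\Norm{Z_n}/\Norm{Y_{n-1}})^p\bigr)\le C_1\bigl(q^{1-2p}K_1^{2p}\bigr)^n\Norm{Z_0}^p,$$
which is summable in $n$ once $p$ is so small that $q^{1-2p}K_1^{2p}<1$.

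Finally, using subadditivity of $x\mapsto x^{p/2}$ (valid since $p\le 2$) and Markov's inequality,
$$\P\Bigl(\sum_n \Norm{Z_n}^2/\Norm{Y_{n-1}}^2 > \varepsilon\Bigr) \le \varepsilon^{-p/2}\sum_n \E\bigl((\Norm{Z_n}/\Norm{Y_{n-1}})^p\bigr)\le C_2\,\Norm{Z_0}^p\,\varepsilon^{-p/2},$$
which tends to zero as $\Norm{Z_0}\to 0$, proving the claim. The main obstacle is the first step: when iterating conditional expectations across blocks of three, one must handle the correlations between $\1_{A_i}$ and the $\F_i$-measurable factors $\Norm{Z_i}^p$ carefully, using Lemma~\ref{A} in its conditional form to override the multiplicative expansion factor $b=2^p$. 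A secondary issue is ensuring that the $L^2$-norm of the Girsanov density in the denominator step stays finite, which requires feeding the $Z$-moment estimate back into the exponential Girsanov exponent via Novikov-type considerations.
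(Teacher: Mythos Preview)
Your overall architecture --- bound the series $\sum_n \Norm{Z_n}^2/\Norm{Y_{n-1}}^2$ by controlling numerator and denominator separately, extract a fractional-moment geometric decay for $\Norm{Z_n}$ from the block-of-three structure, and lower-bound $\Norm{Y_{n-1}}$ --- matches the paper's proof. The numerator part is essentially what the paper does: the correlation issue you flag between $\1_{A_i}$ and $\Norm{Z_i}^p$ is exactly why the paper already proved the three-step tail estimate in the last part of Lemma~\ref{log}; that estimate is then integrated to a $\gamma_0$-moment bound $\E\big[(\Norm{Z_{3m}}/\Norm{Z_{3(m-1)}})^{\gamma_0}\big|\cF_{3(m-1)}\big]\le c_3 e^{-3\gamma_0\delta}$ with $c_3<1$, yielding your $\E\Norm{Z_n}^{\gamma_0}\le C q^n\Norm{Z_0}^{\gamma_0}$. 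So far so good.

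The genuine gap is in your denominator step. Transferring $\E\Norm{\bar Y_n}^{-1/2}\le K^n$ to $\E\Norm{Y_n}^{-1/2}\le K_1^n$ via Cauchy--Schwarz against the Girsanov density $U$ requires an $L^2$ bound on $U$, and that in turn requires an \emph{exponential} moment of $\int_0^\infty v^2(s)\,\dd s$ with $v(s)=\lambda\rho(s)Z(s)/Y(s-1)$. Your fractional moment estimate $\E\Norm{Z_n}^p\le Cq^n\Norm{Z_0}^p$ is far too weak for this; and worse, $\int v^2\,\dd s$ is (up to constants) precisely the quantity $\sum_n \Norm{Z_n}^2/\Norm{Y_{n-1}}^2$ whose smallness you are trying to establish, so the argument is circular. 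Lemma~\ref{absolute} gives only almost-sure finiteness of $\int v^2\,\dd s$, hence absolute continuity, but no moment control whatsoever on $U$.

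The paper sidesteps this entirely by the elementary observation that $Y=X-Z$, so
\[
\P\{\Norm{Y_m}\le A e^{-\beta m}\}\le \P\{\Norm{X_m}\le \tfrac32 A e^{-\beta m}\}+\P\{\Norm{Z_m}\ge \tfrac12 A e^{-\beta m}\}.
\]
The first probability is handled directly by Proposition~\ref{aop1} and Markov (no change of measure), and the second by the numerator estimate already obtained. The paper then controls $\P\{\inf_m \Norm{Y_m}e^{\beta m}<A\}$ and $\P\{\sup_k \Norm{Z_k}e^{\delta k}>C\}$ separately and combines them pathwise (choose $\delta=2\beta$, then $A$ small, then $C$ smaller), rather than via your H\"older step. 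Replacing your Girsanov detour with this triangle-inequality reduction to $X$ makes the proof go through.
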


\begin{proof}
As in the proof of Lemma \ref{absolute}, we get
\begin{equation}\label{summe}
\int_0^{\infty} \rho(t) \frac{Z^2(t)}{Y^2(t-1)}\,\dd t \le 4 \sum_{n=1}^{\infty} \Big(\frac{\Norm{Z_n}}{\|Y_{n-1}\|}\Big)^2 
\le 16 \sum_{n=1}^{\infty} \Big(\frac{\Norm{Z_n}}{\Norm{Y_{n-1}}}\Big)^2 .
\end{equation}
First, we estimate the numerator in the sum from above.
For $\lambda \ge \lambda_0$ (defined in Lemma \ref{log}), let $U_{\lambda}$ be a random variable satisfying 
$$
\P\{U_{\lambda} \ge \alpha\}= \Bigg(3  \alpha^{-2/3}r(\lambda)^{1/3} + \Big( 1-\frac{c_2}2\Big) \big(\big(6 \alpha^{-2/3}\big)\wedge 1\big)\Bigg)\wedge 1,
$$
for $\alpha \ge 0$. Note that $\E U_{\lambda}^{1/3} \le \E U_{\lambda_0}^{1/3} <\infty$. 
Define $c_3:=1-c_2/4$.
For each $\delta>0$, we can find some $\gamma_0 \in (0,1/3]$ and some $\lambda_2\ge \lambda_0$ for which $\E U_{\lambda_2}^{\gamma_0} \le c_3 \exp\{-3\gamma_0 \delta\}$. 
For $m \in \N$, let $\Gamma_m:=\log\big(\Norm{Z_{3m}}/\Norm{Z_{3(m-1)}}\big)$. Then Markov's inequality and the last part of Lemma \ref{log} imply for $C>0$:
\begin{align*}
\P\big\{\big( \Norm{Z_{3k}}\ee^{3k\delta }\big) \ge C\big\} &= \P\Big\{\sum_{m=1}^k \Gamma_m +\log \Norm{Z_0} \ge \log C -3k \delta\Big\}\\
&\le C^{-\gamma_0}\Norm{Z_0}^{\gamma_0} \ee^{3\gamma_0 k \delta} \E \Big(\prod_{m=1}^k \E\big(\ee^{\gamma_0 \Gamma_m} \big| \cF_{3(m-1)}\big)\Big)\\
&\le C^{-\gamma_0}\Norm{Z_0}^{\gamma_0} \ee^{3\gamma_0 k \delta} \Big(\E \big(   U_{\lambda_2}^{\gamma_0}\big)\Big)^k\\
&\le  C^{-\gamma_0}\Norm{Z_0}^{\gamma_0} c_3^k. 
\end{align*}
For $i=1,2$, we obtain in the same way
$$
\P\big\{\big( \Norm{Z_{3k+i}}\ee^{3k\delta }\big) \ge C \big\} \le  C^{-\gamma_0}\E \Norm{Z_i}^{\gamma_0} c_3^k \le  C^{-\gamma_0}2^{\gamma_0}\Norm{Z_0}^{\gamma_0} c_3^k .  
$$
Hence,
\begin{align}\label{absch}
\P\big\{ \sup_{k \in \N_0}\big(\Norm{Z_{k}}\ee^{k\delta }\big) > C\big\} &\le \sum_{k=0}^{\infty} \P\big\{\Norm{Z_{k}}\ee^{k\delta } \ge C\big\}\nonumber\\
&\le   C^{-\gamma_0} (2\ee^{2\delta})^{\gamma_0} c_3^{-2/3} \Norm{Z_0}^{\gamma_0} (1-c_3^{1/3})^{-1}.
\end{align}

Now, we estimate the denominator in \eqref{summe}. Observe that for $A,\beta>0$ 
\begin{align}\label{zw}
\P\{\Norm{Y_m} \le A\ee^{-\beta m}\}&\le \P\{\Norm{X_m} \le \frac 32 A\ee^{-\beta m}\}+ \P\{\Norm{Z_m} \ge \frac A2 \ee^{-\beta m}\}. 
\end{align}
Using Proposition \ref{aop1}, we get
\begin{align}\label{weiter}
\P\Big\{\Norm{X_m} \le& \frac 32 A\ee^{-\beta m}\Big\}
=\P\Big\{\Norm{X_m}^{-1/2} \ge \big(\frac 32 A\ee^{-\beta m}\big)^{-1/2}\Big\}\nonumber\\
&\le K^m \Norm{\eta}^{-1/2} \big( \frac 32 A\ee^{-\beta m}\big)^{1/2}=K^m \big( \frac 32 A\ee^{-\beta m}\big)^{1/2}
\end{align}
which decays exponentially fast provided that $\beta$ is sufficiently large. Fix such a $\beta>0$ and let $\delta:=2 \beta$. Using \eqref{absch}, \eqref{zw}, and 
\eqref{weiter}, we get
\begin{align}\label{auch}
\P\big\{ \inf_{m \in \N_0}\big(\Norm{Y_{m}}\ee^{m\beta }\big) < A\big\} \le \sum_{m=0}^{\infty} \P\big\{\Norm{Y_{m}}\ee^{m\beta } \le A\big\}
\le c_4 A^{1/2} + c_5 A^{-\gamma_0}\Norm{Z_0}^{\gamma_0},
\end{align}
where $c_4,c_5$ are constants which depend on $\beta$ and $\gamma_0$ (which are fixed) but not on $A$. Choosing $A$ sufficiently small 
and $C$ even smaller, we see from \eqref{absch} and \eqref{auch} that for $\Norm{Z_0}$ small enough the right-hand side of \eqref{summe} 
is as small as we like with a probability as close to one as we like. This proves the lemma.
\end{proof}

\begin{proofone} We have established existence and uniqueness of an invariant probability measure $\mu$ of the Markov process $S_t$, $t \ge 0$ on $\bar \rC$ in 
Proposition \ref{support}. 
Let $f,g$ and $\Lambda$ be as defined in \eqref{Grenzwert}. 
It remains to show that for each initial condition $\eta \in \rC\backslash\{0\}$ (or $\eta \in \bar \rC$) the solution $X$ of equation \eqref{sde} satisfies \eqref{Grenzwert}. 
Let $\cM \subseteq \bar \rC$ be the set of initial conditions for which the empirical distribution of $S_t$, $t \ge 0$ converges to $\mu$ weakly almost surely {\em and} 
for which $\lim_{t \to \infty} \frac 1t \int_0^t g^2(S_s)\, \dd s=\int g^2\,\dd \mu$ holds almost surely (the second condition does not follow from the first since $g^2$ is unbounded 
but  $\lim_{t \to \infty} \frac 1t \int_0^t f(S_s)\, \dd s=\int f\,\dd \mu$ does since $f$ is bounded and continuous). 
Once we have shown that $\cM=\bar \rC$ then Theorem \ref{main} follows.\\ 

\noindent {\bf Step 1:} We show that there exists some $\lambda_3 >0$ such that for each pair $(\eta  ,\phi )$ of distinct non-zero initial conditions, the solution $(X,Y)$  of 
\eqref{pair} with $\lambda \ge \lambda_3$ satisfies
\begin{equation}\label{g}
\lim_{s \to \infty}\Big(\frac{Y(s-1)Y(s)}{\Norm{Y_s}^2}\Big)^2-\Big(\frac{X(s-1)X(s)}{\Norm{X_s}^2}\Big)^2 = 0 \mbox{ a.s.},
\end{equation}
(i.e.~$\lim_{s \to \infty}\big( g^2(Y_s/\Norm{Y_s})-g^2(X_s/\Norm{X_s})\big) =0$) and
\begin{equation}\label{g2}
\lim_{s \to \infty}\Big\|\frac{Y_s}{\Norm{Y_s}}-\frac{X_s}{\Norm{X_s}}\Big\| = 0 \mbox{ a.s.}.
\end{equation}
Replacing  $Y$ by $X-Z$, we get
\begin{align*}
\Big(&\frac{Y(s-1)Y(s)}{\Norm{Y_s}^2}\Big)^2-\Big(\frac{X(s-1)X(s)}{\Norm{X_s}^2}\Big)^2 \\
&= \frac{(X(s-1)X(s))^2\big(\Norm{X_s}^4-\Norm{Y_s}^4\big) + A(s)   }{\Norm{Y_s}^4\Norm{X_s}^4},
\end{align*}
where $A(s)$ is a polynomial of degree 8 of the variables $\Norm{X_s}$, $\Norm{Y_s}$, $X(s)$, $X(s-1)$, $Z(s)$, and $Z(s-1)$ such that each 
summand contains either $Z(s)$ of $Z(s-1)$ at least once. Choosing $\lambda$ sufficiently large, $Z$ decays to zero with an exponential rate as large as we desire by Lemma \ref{limsup}. 
Since we also have \`a priori upper and lower bounds for the exponential decay of $X$ (and hence of $Y$), we see that $\lim_{s \to \infty} A(s)/(\Norm{Y_s}^4\Norm{X_s}^4)=0$ for 
large enough $\lambda$. The same is true for the remaining term: just apply the formula $a^4-b^4=(a-b)(a^3+a^2b+ab^2+b^3)$  with $a=\Norm{X_s}$ and $b=\Norm{Y_s}$. Clearly, 
\eqref{g2} also holds for sufficiently large $\lambda$ (cf. \eqref{estimat} with the outer $\Norm{.}$ replaced by the sup-norm).
Therefore, there exists some $\lambda_3 >0$ such that \eqref{g} and \eqref{g2} hold for every $\lambda \ge \lambda_3$.\\

\noindent {\bf Step 2:} Fix an initial condition $\phi \in \bar \rC$ and denote the solution of \eqref{sde} with initial condition $\phi$ by $\bar Y$. We will show that $\phi \in \cM$. 
Let $\lambda \ge \lambda_3$ with $\lambda_3$ as defined in the first step. 
We know that $\mu(\cM)=1$ by Birkhoff's ergodic theorem and the fact that $g^2$ is $\mu$-integrable (cf. the statement after Proposition \ref{aop}). 
From Proposition \ref{support} we know that the support of $\mu$ is $\bar \rC$, so $\cM$ is dense in $\bar \rC$. 
For a given $\lambda \ge \lambda_3$ and $\varepsilon>0$, applying Lemma \ref{wichtig}, we can find some $\eta \in  \cM$ such that for 
$$
V:= \int_0^{\infty} \big(v(s)\big)^2\,\dd s, \mbox{ where } v(s):= \frac{\lambda \rho(s) Z(s)}{Y(s-1)},
$$
we have $\P\{V < 1\} \ge 1-\varepsilon$. 
Define the stopping time 
$$
\tau:=\inf\Big\{ u \ge 0:  \int_0^u v^2(s)\,\dd s \ge 1\Big\},
$$ 
and let $\tilde Y$ solve
\begin{equation}\label{tilde}
\dd \tilde Y(t)=\tilde Y (t-1)\,\dd \tilde W(t),\qquad \tilde Y_0=\phi, 
\end{equation}
where
$$
\tilde W(t):=W(t)+\int_0^{t \wedge \tau} v(s)   \dd s. 
$$
By the Cameron-Martin-Girsanov Theorem, $\tilde W$ is a Wiener process with respect to the measure $\tilde \P$ defined as $\dd\tilde \P(\omega)= U(\omega)  \dd \P(\omega)$,
where 
$$
U:=\exp\Big\{ -\int_0^{\tau}v(s)\dd W(s) - \frac 12 \int_0^{\tau}v^2(s)\dd s\Big\}.
$$
By uniqueness of solutions of \eqref{tilde}, $Y$ and $\tilde Y$ agree almost surely up to $\tau$. In particular, $\P\{Y\equiv \tilde Y\}\ge 1-\varepsilon$. 
Let 
$\Gamma \in \cF$ denote the set of all $\omega$ for which the empirical distribution of $\bar Y_t/\Norm{\bar Y_t}$, $t > 0$ converges to $\mu$ weakly and the 
corresponding integrals of $g^2$ converge as well. We want to show that $\P(\Gamma)=1$ (which is equivalent to $\phi \in \cM$). 
Let $\tilde \Gamma$ be the subset of those $h \in \rC[-1,\infty)$ for which the 
empirical distribution $t^{-1}\int_0^t \delta_{h_s} \dd s$ of $h$ converges to $\mu$ weakly 
as $t \to \infty$ and the corresponding integrals of $g^2$ converge as well. 
We have
\begin{align*}
\P(\Gamma^c)&=\P\{\bar Y \in \tilde \Gamma^c\} = \tilde \P\{\tilde Y \in \tilde \Gamma^c\} = \int {\bf 1}_{\{\tilde Y \notin \tilde \Gamma\}}\frac{\dd \tilde \P}{\dd \P} \,\dd \P
=\E \big({\bf 1}_{\{\tilde Y \notin \tilde \Gamma\}} U \big)\\
&\le \big( \P\{\tilde Y \notin \tilde \Gamma\}\big)^{1/2} \big(\E U^2\big)^{1/2}
\le \big( \P\{Y \notin \tilde \Gamma\}+ \varepsilon \big)^{1/2} \big(\E U^2\big)^{1/2}=\varepsilon^{1/2} \big(\E U^2\big)^{1/2},
\end{align*}
where $ \P\{Y \notin \tilde \Gamma\}=0$ follows from Step 1.
The second moment of $U$ is easily seen to be bounded by a universal constant.  Since $\varepsilon>0$ was arbitrary, we get $\P(\Gamma)=1$, so the assertion of Step 2 
follows and the proof of Theorem \ref{main} is complete. 
\end{proofone}

\end{document}